\newtheorem{thm}{Theorem}[section]
\newcommand{\bt}{\begin{thm}}
\newcommand{\et}{\end{thm}}
\newtheorem{ex}[thm]{Example}
\newtheorem{cor}[thm]{Corollary}   
\newcommand{\bc}{\begin{cor}}
\newcommand{\ec}{\end{cor}}
\newtheorem{lem}[thm]{Lemma}   
\newcommand{\bl}{\begin{lem}}
\newcommand{\el}{\end{lem}}
\newtheorem{prop}[thm]{Proposition}
\newcommand{\bp}{\begin{prop}}
\newcommand{\ep}{\end{prop}}
\newtheorem{defn}[thm]{Definition}
\newcommand{\bd}{\begin{defn}}    
\newcommand{\ed}{\end{defn}}
\newtheorem{rmrk}[thm]{Remark}   
\newcommand{\br}{\begin{rmrk}}
\newcommand{\er}{\end{rmrk}}
\newtheorem{example}[thm]{Example}
\newcommand{\R}{\mathbb R}
\newcommand{\abs}[1]{\left|#1\right|}
\newcommand{\GHto}{\stackrel { \textrm{GH}}{\longrightarrow} }
\newcommand{\Fto}{\stackrel {\mathcal{F}}{\longrightarrow} }
\newcommand{\be}{\begin{equation}}
\newcommand{\ee}{\end{equation}}
\newcommand{\diam}{\operatorname{Diam}}
\newcommand{\set}{\operatorname{set}}
\newcommand{\Lip}{\operatorname{Lip}}
\newcommand{\mass}{{\mathbf M}}
\newcommand{\vol}{\operatorname{Vol}}
\newcommand{\rstr}{\:\mbox{\rule{0.1ex}{1.2ex}\rule{1.1ex}{0.1ex}}\:}
\newcommand{\spt}{\operatorname{spt}}
\newcommand{\lp}{\left (}
\newcommand{\rp}{\right )}
\begin{document}

\title[Sobolev Bounds and Convergence]{Sobolev Bounds and Convergence of Riemannian Manifolds}

\author{Brian Allen}
\address{USMA}
\email{brianallenmath@gmail.com}

\author{Edward Bryden}
\address{Stony Brook}
\email{etbryden@gmail.com}






\begin{abstract}
We consider sequences of compact Riemannian manifolds with uniform Sobolev bounds on their metric tensors, and prove that their distance functions are uniformly bounded in the H\"{o}lder sense. This is done by establishing a general trace inequality on Riemannian manifolds which is an interesting result on its own. We provide examples demonstrating how each of our hypotheses are necessary.  In the Appendix by the first author with Christina Sormani, we prove that sequences of compact integral current spaces without boundary (including Riemannian manifolds) that have uniform H\"{o}lder bounds on their distance functions have subsequences converging in the Gromov--Hausdorff (GH) sense.  If in addition they have a uniform upper bound on mass (volume) then they converge in the Sormani--Wenger Intrinsic Flat (SWIF) sense to a limit whose metric completion is the GH limit.  We provide an example of a sequence developing a cusp demonstrating why the SWIF and GH limits may not agree.
\end{abstract}

\maketitle

\section{Introduction}\label{sec: Intro}

When studying stability problems where metric convergence notions such as uniform, Gromov-Hausdorff (GH) or Sormani-Wenger Intrinsic Flat (SWIF) convergence are appropriate, it is common to obtain Sobolev bounds in a special coordinate system first. Then the question which naturally follows is how do we use the acquired Sobolev bounds to obtain uniform, GH and/or SWIF convergence? The goal of this paper is to give an answer to this question through compactness results involving Sobolev bounds on a sequence of metrics. 

Sobolev bounds on metric tensors have been applied to solve a special case of Gromov's Conjecture on the Stability of the Scalar Torus Theorem \cite{Gromov-Plateau, Sormani-scalar}. In particular, Sobolev bounds were used by the first named author, Hernandez-Vazquez, Parise, Payne, and Wang in \cite{AHPPW1} to show uniform, GH and SWIF convergence to a flat torus, demonstrating the stability of the scalar torus rigidity in the warped product case. This provides evidence of the potential application of Sobolev bounds to showing stability of the scalar torus rigidity theorem for the general case to address the conjecture of Gromov and Sormani, which is stated in \cite{AHPPW1}.

Sobolev bounds on metric tensors have also arisen when working to prove special cases of Lee and Sormani's Conjecture on the Stability of the Schoen-Yau Positive Mass Theorem \cite{Schoen-Yau-positive-mass, LeeSormani1}. For instance, the second named author has shown $W^{1,p}$, $1 \le p < 2$, stability of the positive mass theorem (PMT) in the case of axisymmetric, asymptotically flat manifolds with additional technical assumptions stated precisely in \cite{Bryden-Stability}. Ultimately, the goal of this work is to show SWIF convergence in the axisymmetric case in order to address the conjecture of Lee and Sormani \cite{LeeSormani1} on stability of the PMT. Similarly, the first named author has shown $L^2$ stability of the PMT using Inverse Mean Curvature Flow (IMCF) \cite{Allen-Inverse} and has recently shown $W^{1,2}$ stability of the PMT using IMCF \cite{Allen-InverseSobolev}. The authors expect the results of this paper will help to obtain SWIF convergence using the previously established Sobolev convergence.

Our main theorem gives conditions on a sequence of Riemannian manifolds which guarantee uniform H\"{o}lder bounds on the corresponding distance functions which allow us to prove compactness. The bounds required to obtain the uniform, GH, and SWIF convergence of Theorem \ref{HLS-thm} are an improvement of a theorem in the appendix of \cite{HLS}. No expertise on GH or SWIF convergence is needed to read the rest of the paper.

\begin{thm}\label{SobolevBoundsAndHolderCompactness}
Let $M^n$ be compact, possibly with boundary, $M_j=(M,g_j)$ a sequence of Riemmanian manifolds, and $M_0=(M,g_0)$ a background Riemannian manifold with metric $d_0$. If
\begin{align}
\abs{\abs{|g_j|_{g_0}}}_{W_{g_0}^{n-1,p}(M)} \le C, p > 1\label{SobolevUpperBound1}
\end{align}
 and
\begin{align}
g_j(v,v) \ge c g_0(v,v)\: \forall q \in M, v \in T_qM,\label{MetricTensorLowerBound1}
\end{align}
then there exists a subsequence which converges in the uniform and GH sense to a length space $M_{\infty}=(M,d_{\infty})$ so that
\begin{align}
\sqrt{c} d_0(p,q) \le d_{\infty}(p,q) \le C^{\frac{1}{p}} d_0(p,q)^{\frac{p-1}{p}},\label{LimitingMetricBounds}
\end{align}
where $C$ depends on $n, p$, the geometry of $M_0$, and the constant in \eqref{SobolevUpperBound1}.

In addition, the subsequence can be chosen to SWIF converge to $M_\infty'=(M', d_\infty, T_\infty)$ where the metric completion of $M'$ is $M$ and $T_{\infty}$ is an integral current.
\end{thm}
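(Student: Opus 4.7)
The plan is to reduce the theorem to a single key Hölder estimate on the distance functions and then appeal to the appendix for the SWIF statement. The lower bound
\begin{align*}
d_j(x,y) \ge \sqrt{c}\, d_0(x,y)
\end{align*}
in \eqref{LimitingMetricBounds} follows at once by integrating the pointwise inequality $g_j \ge c g_0$ along a $d_j$-minimizing curve. The substantive content is the upper bound
\begin{align*}
d_j(x,y) \le C^{1/p}\, d_0(x,y)^{(p-1)/p},
\end{align*}
holding uniformly in $j$. Once this is established, the family $\{d_j : M \times M \to \R\}$ is equicontinuous with respect to $d_0$, so Arzelà--Ascoli extracts a subsequence converging uniformly on the compact product $M \times M$ to a pseudometric $d_\infty$ satisfying \eqref{LimitingMetricBounds}. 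Uniform convergence of metrics on a compact space then yields Gromov--Hausdorff convergence to the length space obtained by quotienting $M$ by $\{d_\infty = 0\}$.

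To prove the upper bound, set $u := |g_j|_{g_0}$ and let $\gamma$ be a $g_0$-arclength-parametrized $C^1$ curve from $x$ to $y$ of $g_0$-length $L$. From the pointwise bound $g_j(v,v) \le u\, g_0(v,v)$ and Hölder's inequality,
\begin{align*}
L_{g_j}(\gamma) \le \int_0^L \sqrt{u(\gamma(s))}\, ds \le L^{(p-1)/p} \lp\int_0^L u(\gamma(s))^{p/2}\, ds\rp^{1/p},
\end{align*}
so it suffices to exhibit, for each pair $x,y$, a single curve $\gamma$ along which $\int_\gamma u^{p/2}\, ds$ is controlled by the Sobolev norm of $u$. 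This is the trace inequality promised in the abstract, which I would obtain by an averaging argument. Working in a normal chart on $M_0$ around the midpoint of $x$ and $y$, form the family of piecewise-$g_0$-geodesic paths $\gamma_w$ from $x$ to $y$ through a perturbation point $w$ ranging over a small ball $B$ transverse to the segment $xy$. Averaging $\int_{\gamma_w} u^{p/2}\, ds$ over $w$ and invoking Fubini recasts these line integrals as a single weighted Lebesgue integral of $u^{p/2}$ over a solid tube in $M$; an iterated trace estimate (which requires precisely $n-1$ orders of Sobolev regularity to reduce from $M^n$ to an integrable bound on the $1$-dimensional fibers, exactly matching the hypothesis \eqref{SobolevUpperBound1}) then bounds this tube integral by the Sobolev norm of $u$. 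Selecting a $\gamma_w$ whose length does not exceed the average yields the claimed Hölder estimate on $d_j$.

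For the SWIF conclusion, the appendix by the first author with Sormani shows that sequences of compact integral current spaces without boundary obeying uniform Hölder bounds on their distance functions and a uniform upper bound on mass admit SWIF-convergent subsequences whose limits have metric completion equal to the corresponding GH limit. The requisite volume bound is supplied by
\begin{align*}
\vol_{g_j}(M) = \int_M \sqrt{\det g_j}\, d\vol_{g_0} \le C'\int_M u^{n/2}\, d\vol_{g_0},
\end{align*}
which is uniformly controlled via the Sobolev embedding $W^{n-1,p}_{g_0}(M) \hookrightarrow L^{n/2}(M, g_0)$, valid on the fixed compact background $(M,g_0)$ for every $p > 1$. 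The principal obstacle is the trace inequality itself: one must verify that the averaging argument transfers cleanly from the Euclidean model to the curved background via uniformly bounded chart distortion, that the iterated Sobolev trace combines with Hölder's inequality so as to produce the desired exponent on $L$, and that all constants truly depend only on $n$, $p$, the background geometry of $M_0$, and the constant in \eqref{SobolevUpperBound1}. Once the trace inequality is available, the Arzelà--Ascoli step, GH convergence, volume estimate, and appeal to the appendix for SWIF are all routine.
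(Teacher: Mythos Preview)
Your overall architecture matches the paper's exactly: the lower bound from $g_j\ge c g_0$, the upper H\"older bound via a trace-type estimate along $g_0$-geodesics, Arzel\`a--Ascoli for uniform/GH convergence, a volume bound from Sobolev embedding, and finally the appendix theorem for SWIF. The paper packages the last three steps as Theorem~\ref{HolderCompactness} and Theorem~\ref{HLS-thm}, just as you anticipate.

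The substantive divergence is in how the trace estimate is obtained. The paper does \emph{not} argue by averaging over a family of perturbed curves. Instead it proves a freestanding trace inequality (Theorem~\ref{SobolevTraceTrick}): for every length-minimizing $g_0$-geodesic $\gamma$,
\[
\Big(\int_\gamma |f|^p\,dt\Big)^{1/p}\le C\,\|f\|_{W^{n-1,p}_{g_0}(M)},
\]
with $C$ depending only on the background geometry. The proof follows Maz'ya--Shaposhnikova: a base case (Lemma~\ref{BaseCase}) using coarea and a Besicovitch covering argument (Lemma~\ref{SetLem}), then an induction on the number of derivatives via the Green's function of the Laplacian on the closed double of $M$. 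One then simply applies this with $f=|g_j|_{g_0}$ and the $g_0$-geodesic from $q_1$ to $q_2$; no curve selection is needed.

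Your sketch conflates two distinct mechanisms. Averaging $\int_{\gamma_w} u^{p/2}\,ds$ over an $(n-1)$-parameter family through a midpoint does \emph{not} produce a benign tube integral: the Jacobian of the parametrization degenerates at the endpoints, yielding a weight like $\operatorname{dist}(\cdot,x)^{-(n-1)}$, and controlling $\int u^{p/2}\,r^{-(n-1)}\,dV$ by $\|u\|_{W^{n-1,p}}$ is a Hardy/Sobolev estimate, not an ``iterated trace.'' Conversely, if by ``iterated trace'' you mean applying the codimension-one trace $W^{k,p}\to W^{k-1,p}$ successively $n-1$ times down to a curve, then averaging is irrelevant and you must instead confront the uniformity of the constants over all geodesics --- exactly the issue the paper's Green's-function argument is designed to handle. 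Either route could plausibly be made to work, but your paragraph does not commit to one and does not address its characteristic difficulty (singular weight in the first case, uniform constants in the second). A minor related point: your pointwise bound $\sqrt{g_j(\gamma',\gamma')}\le\sqrt{u}$ is the sharp one and leads you to $\int_\gamma u^{p/2}$, whereas the paper works with $\int_\gamma u^p$; both yield the same H\"older exponent $(p-1)/p$ on $d_0$, so this is harmless.
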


For comparison purposes it is useful to understand what notions of convergence are known to imply GH and SWIF convergence. It was shown by Sormani and Wenger that Gromov Lipschitz convergence (see \cite{Gromov-metric} for the definition) implies SWIF convergence \cite{SW-JDG} and Gromov showed that Lipschitz convergence implies GH convergence \cite{Gromov-metric}. In general, GH and SWIF convergence do not have the same limits: cusp points disappear under SWIF convergence as do entire regions that collapse. It was shown by Sormani and Wenger \cite{SW-CVPDE}, and Matveev and Portegies \cite{MP-PropIF} that GH=SWIF for sequences of noncollapsing Riemannian manifolds with lower bounds on Ricci curvature, and Huang, Lee, and Sormani for sequences of Riemannian manifolds with biLipschtiz bounds on their distance functions \cite{HLS}, and by Perales for special sequences of manifolds with boundary \cite{RP-Boundary}. The first author and Sormani \cite{Allen-Sormani-I} gave conditions which guarantee that the $L^2$ convergence of a sequence of warped products will agree with the uniform, GH and SWIF convergence. In particular, one should note that Example 3.4 of \cite{Allen-Sormani-I} shows that $L^p$ convergence or $W^{k,p}$ convergence is not enough to imply GH or SWIF convergence in general. 

One should also contrast these results to the $C^{\alpha}$ compactness theorem of Anderson and Cheeger \cite{Anderson-Cheeger} where it is shown that the space of compact Riemannian manifolds with lower bounds on Ricci curvature and injectivity radius, and an upper bound on volume, are precompact with respect to the $C^{\alpha}$ notion of distance between Riemannian manifolds. Anderson and Cheeger use the geometric bounds in order to show that a finite collection of harmonic coordinate charts must exist which is well controlled in the $C^{\alpha}$ sense. In the present work we aim to make weaker assumptions on the sequence of Riemannian manifolds at the cost of the weaker notions of convergence like uniform, GH and SWIF. It should be noted though that if one chooses $p > \frac{n}{n-1}$ in Theorem \ref{SobolevBoundsAndHolderCompactness} then by the Sobolev embedding theorem one can infer $C^{\alpha}$ bounds on $g_j$ in terms of the background metric $g_0$ and hence by Arzela-Ascoli we would obtain precompactness in the $C^{\alpha}$ sense. Hence the main importance of the above theorem is when $p \le \frac{n}{n-1}$.

In order to prove our main theorem we prove the following trace inequality which shows how assumption \eqref{SobolevUpperBound1} of Theorem \ref{SobolevBoundsAndHolderCompactness} controls integrals of $|g_j|_{g_0}$ along geodesics with respect to the background metric $g_0$. The following theorem is a generalization of results of the second named author in \cite{Bryden-Stability} where bounds of this type were used to estimate the distances between points in axisymmetric initial data with small ADM mass which satisfy an additional technical hypothesis.

\begin{thm}\label{SobolevTraceTrick}
	Let $(K,g)$ be a smooth compact Riemannian $n$-dimensional manifold, possibly with boundary, and let $p \ge 1$. There exists a constant, denoted $C=C(g,K)$, such that
	\begin{equation}
		\left(\int_{\gamma}\abs{f}^pdt\right)^{\frac{1}{p}}\le C\abs{\abs{f}}_{W^{n-1,p}_{g}(K)},
	\end{equation}
	for length minimizing geodesics $\gamma$ with respect to $g$. 
\end{thm}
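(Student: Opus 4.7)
The plan is to reduce the statement to a Euclidean trace inequality and then patch it onto $(K,g)$ via a finite normal-coordinate cover. First I would establish the one-dimensional trace inequality: for absolutely continuous $h:[0,L]\to\mathbb{R}$, averaging the identity $h(0)=h(s)-\int_0^s h'(t)\,dt$ over $s\in[0,L]$ and applying H\"older yields
\begin{equation}
|h(0)|^p \le C_L\bigl(\|h\|_{L^p([0,L])}^p + \|h'\|_{L^p([0,L])}^p\bigr). \notag
\end{equation}

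Next, iterating this estimate in the $n-1$ perpendicular directions on a Euclidean box $Q=[0,L]\times[-\rho,\rho]^{n-1}$ produces, for smooth $f$,
\begin{equation}
\int_0^L |f(x_1,0,\dots,0)|^p\,dx_1 \le C(L,\rho)\,\|f\|_{W^{n-1,p}(Q)}^p. \notag
\end{equation}
At each iteration step one drops a coordinate dimension at the cost of one derivative, so after $n-1$ steps one reaches a 1D integral along the $x_1$-axis controlled by mixed partials $\partial_{x_2}^{a_2}\cdots\partial_{x_n}^{a_n}f$ with each $a_i\in\{0,1\}$. Density of smooth functions then extends the inequality to all $f\in W^{n-1,p}(Q)$.

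Next I would transfer this to $(K,g)$. Compactness of $K$ gives a radius $\rho_0<\injrad(K)$ and a constant $\Lambda>1$ such that, at every $q\in K$, normal coordinates on $B_{\rho_0}(q)$ satisfy $\Lambda^{-1}\delta_{ij}\le g_{ij}\le \Lambda\delta_{ij}$ and have uniformly bounded Christoffel symbols, so that Euclidean and $g$-Sobolev norms of order $\le n-1$ are comparable with a universal constant. A length-minimizing geodesic $\gamma$ has $g$-length at most $\diam(K)$ and can be subdivided into at most $N:=\lceil \diam(K)/\rho_0\rceil+1$ subsegments $\gamma_i$ of length $\le\rho_0$, each contained in a normal-coordinate ball $B_{\rho_0}(q_i)$ centered at its midpoint $q_i$. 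Since geodesics through the center of a normal chart are straight lines through the origin, each $\gamma_i$ is a straight segment in these coordinates; the Euclidean inequality above therefore applies and, after the Euclidean-vs-Riemannian comparison, yields $\int_{\gamma_i}|f|^p\,dt\le C\,\|f\|^p_{W^{n-1,p}_g(B_{\rho_0}(q_i))}$. Summing over $i=1,\dots,N$ completes the proof.

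The main obstacle is the iterated Euclidean trace inequality: one must carefully bookkeep the mixed partial derivatives produced at each 1D step, confirming that every resulting multi-index has entries at most $1$ (hence total order $\le n-1$) so that each term is dominated by $\|f\|_{W^{n-1,p}(Q)}$. Once this Euclidean lemma is in place, globalization on $(K,g)$ is routine: compactness supplies the uniform chart radius $\rho_0$ and the uniform comparison constant $\Lambda$, so that both the number of segments $N$ and the per-segment constant depend only on the geometry of $(K,g)$.
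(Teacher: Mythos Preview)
Your approach is correct in spirit and genuinely different from the paper's. The paper does not localize to normal coordinates at all; instead it first doubles $K$ across its boundary to obtain a closed manifold $(M,h)$, then proves a general measure-theoretic trace inequality
\[
\int_M |\phi|\,d\mu \le C\Bigl(\sup_{x,r} r^{m-n}\mu(B(x,r))\Bigr)\,\|\phi\|_{W^{m,1}_h(M)}
\]
by an isoperimetric-type covering argument (Besicovitch and a relative isoperimetric lemma) for $m=1$, and then by induction on $m$ using the Green's function of the Laplacian as a Riesz-type potential. The case $p>1$ is obtained from $p=1$ by the chain rule and a similar Green's function induction. The theorem for geodesics is then the specialization $\mu=\mathcal{H}^1\!\rstr\gamma$, $m=n-1$, where the quantity $\sup_{x,r} r^{-1}\mu_\gamma(B(x,r))$ is bounded because $\gamma$ is a minimizing geodesic. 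Your route is considerably more elementary and exploits the specific fact that geodesics through the center of a normal chart are straight; what the paper's machinery buys is a trace inequality valid for \emph{any} Borel measure with controlled $(n-m)$-dimensional density, not just arclength along geodesics.

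Two points you should tighten. First, you do not address the boundary: if $\gamma$ (or its midpoint $q_i$) is close to $\partial K$, your box $Q$ need not sit inside $K$, and normal coordinates of radius $\rho_0$ may not exist. The paper handles this by a collar-and-doubling argument together with a bounded extension $W^{n-1,p}_g(K)\to W^{n-1,p}_h(M)$; you should invoke the same device (or an extension operator) before localizing. Second, comparing the Euclidean and Riemannian $W^{n-1,p}$ norms on a chart requires uniform bounds not just on the Christoffel symbols but on their coordinate derivatives up to order $n-2$, since $\nabla^{n-1}f$ differs from $\partial^{n-1}f$ by lower-order terms involving $\partial^k\Gamma$ for $k\le n-2$. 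On a smooth compact manifold this is automatic, but your sentence ``uniformly bounded Christoffel symbols'' understates what is actually used.
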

\begin{rmrk}
Note that in the case where $p > \frac{n}{n-1}$ we can easily apply Sobolev embedding theorems to deduce H\"{o}lder control and so the nontrivial case of Theorem \ref{SobolevTraceTrick} is when $1 \le p \le \frac{n}{n-1}$.
\end{rmrk}

To complete the proof of the main theorem, we apply the following theorem proven in the appendix by the first named author with Christina Sormani.

\begin{thm}\label{HLS-thm}
Let $M$ be a fixed compact, $C^0$ manifold with piecewise continuous metric tensor $g_0$ and $g_j$ a sequence of piecewise continuous Riemannian metrics on $M$, defining distances $d_j$ such that
\be\label{d_j1}
\frac{1}{\lambda}d_0(p,q) \le  d_j(p,q) \le \lambda d_0(p,q)^{\alpha},
\ee
for some $\lambda \ge 1$ where $\alpha\in (0,1]$ then there exists a subsequence of $(M, g_j)$
that converges in the uniform and Gromov-Hausdorff sense. In addition, if there is a uniform upper bound on volume
\be \label{HLS-thm-vol1}
\vol(M, g_j) \le V_0.
\ee 
the subsequence converges in the intrinsic flat sense to the same space with a possibly different current structure than the sequence(up to possibly
taking a metric completion of the intrinsic flat limit to get
the Gromov-Hausdorff and uniform limit). 
\end{thm}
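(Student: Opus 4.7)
The plan breaks into two parts: first, extract a uniform limit of the distance functions from the H\"older upper bound, which automatically gives uniform and Gromov--Hausdorff convergence; then, under the additional volume bound, upgrade to intrinsic flat convergence via Ambrosio--Kirchheim compactness of integral currents in a common metric space.

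For the first part, the hypothesis $d_j(p,q)\le \lambda d_0(p,q)^\alpha$ on the compact space $(M,d_0)$ says that $\{d_j\}$ is a uniformly bounded, uniformly H\"older equicontinuous family of continuous functions on $M\times M$. Arzel\`a--Ascoli then extracts a subsequence with $d_j\to d_\infty$ uniformly. Symmetry, the triangle inequality, nonnegativity, and the lower bound $\frac{1}{\lambda}d_0\le d_\infty$ all pass to the uniform limit, so $(M,d_\infty)$ is a genuine compact metric space. Uniform convergence of distance functions on a fixed set is precisely Gromov's ``uniform convergence'' of metric spaces and forces Gromov--Hausdorff convergence, since $\mathrm{id}\colon (M,d_j)\to (M,d_\infty)$ is an $\varepsilon_j$-isometry with $\varepsilon_j=\sup_{M\times M}|d_j-d_\infty|\to 0$.

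For the intrinsic flat part, assume in addition $\vol(M,g_j)\le V_0$. Following the Sormani--Wenger strategy, I would isometrically embed all $(M,d_j)$ and $(M,d_\infty)$ into a single complete metric space $Z$ via maps $\varphi_j,\varphi_\infty$, chosen so that $\sup_{p\in M} d_Z(\varphi_j(p),\varphi_\infty(p))\to 0$; this is made possible by the uniform convergence $d_j\to d_\infty$, e.g.\ through a Kuratowski-type embedding with respect to $d_\infty$ combined with small perturbations matching $d_j$. The pushforward integral currents $T_j$ on $Z$ then have masses bounded by $V_0$ and share a common bounded support, so Ambrosio--Kirchheim compactness produces a flat-convergent subsequence with integer-rectifiable limit $T_\infty$. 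The resulting integral current space $M'_\infty=(M',d_\infty,T_\infty)$ has $M'$ equal to the set of positive lower density points of $\|T_\infty\|$, which is a subset of the image of $M$; its $d_\infty$-metric completion is the full uniform/GH limit $(M,d_\infty)$.

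The main technical obstacle I anticipate is constructing the joint embeddings into $Z$ so that mass is preserved in the flat limit and so that the limit current is genuinely supported on (the image of) $(M,d_\infty)$; the uniform bound $\sup|d_j-d_\infty|\to 0$ is exactly what makes this possible, but some care is needed to fit all $(M,d_j)$ into a single ambient space without breaking the mass bound. Once that is in place, Ambrosio--Kirchheim does the heavy lifting. The caveats ``possibly different current structure'' and ``up to metric completion'' in the statement reflect the genuine phenomenon that points where the $g_j$ collapse, such as cusp tips, yield zero density in $T_\infty$ and are therefore absent from $M'$, but reappear as $d_\infty$-Cauchy limits in the completion; this is intrinsic to the difference between GH and SWIF convergence rather than an artifact of the argument.
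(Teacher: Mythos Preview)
Your uniform/GH argument is correct and is exactly what the paper does: view the $d_j$ as a H\"older-equicontinuous family on $(M\times M, d_{taxi})$, apply Arzel\`a--Ascoli, and check that the uniform limit $d_\infty$ is a genuine metric (definiteness coming from the lower bound $\tfrac{1}{\lambda}d_0\le d_\infty$).

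For the intrinsic flat part your skeleton also matches the paper's: embed all $(M,d_j)$ and $(M,d_\infty)$ isometrically into a common space and apply Ambrosio--Kirchheim compactness under the mass bound. The paper uses the explicit ``page'' spaces $Z_j=[-\epsilon_j,\epsilon_j]\times X$ from \cite{HLS}, glued along their $(X,d_\infty)$ edges into a book $Z$, rather than a Kuratowski embedding, but this is a matter of convenience.

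The genuine gap is your last assertion, that the $d_\infty$-completion of the SWIF limit $M'$ is all of $M$. You state this as if it were automatic and locate the ``main technical obstacle'' in building the embeddings, but the embeddings are routine; the real work is exactly this completion claim, and it does not follow from uniform convergence of the $d_j$ plus Ambrosio--Kirchheim alone. A priori the flat limit $T_\infty$ could be the zero current, or its support could miss an open subset of $\varphi_\infty(M)$. The paper rules this out by a contradiction argument that uses the \emph{lower} bound $\tfrac{1}{\lambda}d_0\le d_j$ in two essential ways: first, a $d_j$-ball of radius $r/2$ contains a fixed $d_0$-ball $B_{d_0}(x_\infty,\lambda r/4)$; second, the mass measures satisfy $\|T\|_{d_j}\ge \lambda^{-m}\|T\|_{d_0}$. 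Together these give a uniform positive lower bound on $\|S_j\|(B(z_0,r))$ for every $z_0\in\varphi_\infty(M)$, forcing $\spt(T_\infty)=\varphi_\infty(M)$ and hence $Cl_{d_\infty}(M')=M$. Your proposal never invokes the lower Lipschitz bound in the SWIF step; without it, your remark about cusp tips being recovered in the completion handles isolated zero-density points but does not prevent an open set's worth of mass from disappearing in the limit.
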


This theorem is an improvement of a theorem which appeared in the appendix of the work by Huang, Lee and Sormani \cite{HLS} where uniform Lipschitz bounds on a sequence of distance functions is assumed in order to imply that a subsequence converges in the uniform, GH and SWIF sense. We replace the Lipschitz bounds by H\"{o}lder bounds in order to also show that a subsequence converges in the uniform, GH and SWIF sense. One difference in the conclusion is that the GH and SWIF limits could disagree at cusp points as illuminated by Example \ref{CuspEx}. 

Example~\ref{notpyramids} provides a sequence of piecewise flat manifolds which satisfy the hypothesis \eqref{d_j1} of  Theorem \ref{HLS-thm} but fail hypotheses \eqref{HLS-thm-vol1} because their volumes diverge to infinity.
We do not require the additional hypothesis on the volume in Theorem \ref{SobolevBoundsAndHolderCompactness} because we prove in
Section \ref{sec: MainThmProof} that hypothesis \eqref{SobolevUpperBound1} implies an upper bound on volume holds.

Example \ref{NonUniform} gives a sequence of warped products where a uniform Lipschitz bound does not exist but a uniform H\"{o}lder bound does exist and hence one can show that a subsequence converges to the cylinder in the uniform, GH and SWIF sense. This is an important example since it illustrates the necessity of having an analogous theorem to the one found in the appendix of \cite{HLS} in terms of H\"{o}lder bounds.

In Section \ref{sec: Background}, we review the definitions of uniform, GH and SWIF convergence of Riemannian manifolds. We stress that the reader does not need to be an expert in GH or SWIF convergence and could choose to appreciate each result in terms of uniform convergence if desired.

In Section \ref{sec: Examples}, we give examples which illustrate the necessity of the assumptions of our main theorems.

In Section \ref{sec:Trace}, we show how a trace theorem can be used to imply $L^p$ bounds on the metric tensor along geodesics of a background metric from Sobolev bounds. 

In Section \ref{sec: MainThmProof}, we give the proof of Theorem \ref{SobolevBoundsAndHolderCompactness} as well as prove Theorem \ref{HolderCompactness} which is a similar theorem with the Sobolev bound replaced by an $L^p$ bound of the metric along geodesics of the background metric.

In Section \ref{sec: App}, the first author and Christina Sormani give a proof of Theorem \ref{HLS-thm}. One should note that the uniform convergence of Theorem \ref{HLS-thm} follows from the Arzela-Ascoli theorem and requires no knowledge of GH and SWIF convergence. The majority of the proof is to verify technical results involving the support and settled completion of the integral currents which define the SWIF convergence.

\vspace{.2in}
\noindent{\bf Acknowledgements:} The author's would like to thank Marcus Khuri, Dan Lee, and Christina Sormani for their constant support and encouragement. In addition, we would like to thank Jorge Basilio, Lisandra Hernandez-Vazquez, Demetre Kazaras, Sajjad Lakzian, and Raquel Perales for useful discussions. The first named author would also like to thank Theodora Bourni, Carolyn Gordon, Mat Langford, Marco Mendez, Bjoern Muetzel, Andre Neves, Xiaochong Rong, Thomas Yu and Gang Zhou for recent invitations to speak in their respective seminars and conferences. Christina Sormani is supported by NSF grant DMS-1612049 which has also supported the authors travel.

\section{Background} \label{sec: Background}

In this section we review the definitions of uniform, GH and SWIF convergence of Riemannian manifolds. Throughout we will give references so that the interested reader can dig deeper into the details of these definitions of convergence if desired. We note that the reader who is not familiar with GH and SWIF convergence could choose to solely digest the definition of uniform convergence of metric spaces and understand each result of the paper in terms of this convergence alone.

\subsection{Uniform Convergence}\label{subsec: Uniform Def}

We define the uniform distance between the metric spaces $(X,d_1)$, $(X,d_2)$  to be
\begin{align}
d_{unif}(d_1,d_2) = \sup_{x,y\in X} |d_1(x,y) - d_2(x,y)|.
\end{align}
If one thinks of the metrics as functions, $d_i: X\times X \rightarrow \R$, then the uniform distance $d_{unif}(d_1,d_2)$ is equivalent to the $C^0$ distance between functions. We say that a sequence of metrics spaces $(X, d_i)$ converges to the metric space $(X,d_{\infty})$ if $d_{unif}(d_i, d_{\infty}) \rightarrow 0$ as $i \rightarrow \infty$.

One limitation of uniform convergence is that it requires the metric spaces to have the same topology. In the current paper this is not a problem since we are assuming that the sequence of Riemannian manifolds are all defined on the same background manifold. See the text of Burago, Burago, and Ivanov  \cite{BBI} for more information on uniform convergence.

\subsection{Gromov-Hausdorff Convergence}\label{subsec: GH Def}

Gromov-Hausdorff (GH) convergence was introduced by Gromov in \cite{Gromov-metric}
and which is discussed in the text of Burago, Burago, and Ivanov \cite{BBI}. Given two metric spaces we can define the GH distance
between them, which is more general than uniform convergence since it doesn't require the metrics spaces to have the same topology. It is the intrinsic version of the Hausdorff distance between
sets in a common metric space $Z$ which is defined as
\begin{equation} 
d_H^Z(U_1, U_2) = \inf\{ r \, : \, U_1\subset B_r(U_2) \textrm{ and } U_2\subset B_r(U_1)\},
\end{equation} 
where $B_r(U)=\{x \in Z: \, \exists y \in U \, s.t.\, d_Z(x,y)<r\}$.
Given a pair of compact metric spaces, $(X_i,d_i)$ we can use distance preserving maps to embed both metric spaces in a common, compact metric space $Z$. A distance preserving map is defined by
\begin{equation}
\varphi_i: X_i \to Z \textrm{ such that } d_Z(\varphi_i(p), \varphi_i(q)) = d_i(p,q) \,\, \forall p,q \in X_i
\end{equation}  
where it is important to note that we are requiring a metric isometry here which is stronger than a Riemannian isometry.

The Gromov-Hausdorff distance between two compact metric spaces, $(X_i,d_i)$,
is then defined to be
\begin{equation}
d_{GH}((X_1,d_1),(X_2,d_2))=\inf\{ d_H^Z(\varphi_1(X_1), \varphi_2(X_2)) \, : \, \varphi_i: X_i\to Z\}
\end{equation}
where the infimum is taken over all compact metric spaces $Z$ and all
distance preserving maps, $\varphi_i: X_i \to Z$. We say a sequence of metric spaces, $X_i$, converges to a metric space, $X_{\infty}$, if $d_{GH}(X_i,X_{\infty}) \rightarrow 0$.

\subsection{Sormani-Wenger Intrinsic Flat Convergence} \label{subsec: SWIF}

Gromov-Hausdorff distance between metric spaces is an extremely powerful and useful notion of distance but some stability problems have been shown to be false under GH convergence \cite{Sormani-scalar}. To this end we now define another notion of convergence, introduced by Sormani and Wenger in \cite{SW-JDG}, which is defined on integral currents spaces.

The idea is to build an intrisic version of the flat distance on $\R^n$ of Federer and Fleming \cite{FF} for any metric space. Using the construction of currents on metric spaces by Ambrosio and Kirchheim \cite{AK} one can define the flat distance for currents $T_1, T_2$ on a metric space $Z$ as follows
\begin{align}
d_F^Z(T_1,T_2) = \inf\{\mass^n(A)+\mass^{n+1}(B): A + \partial B = T_1 - T_2\}.
\end{align}

Sormani and Wenger \cite{SW-JDG} then use this notion of flat convergence to define the intrinsic notion of convergence for integral currents spaces  $M_1=(X_1,d_1,T_1)$ and $M_2=(X_2,d_2,T_2)$ as
\begin{align}
d_{\mathcal{F}}(M_1,M_2) = \inf\{d_F^Z(\varphi_{1\#}T_1,\varphi_{2\#}T_2): \varphi_j: X_j \rightarrow Z\}
\end{align} 
where the infimum is taken over all complete metric spaces $Z$ and all metric isometric embeddings $\varphi_j: X_j \rightarrow Z$. See \cite{SW-JDG, Sormani-scalar} for the definition of integral current spaces. Note that compact, oriented Riemannian manifolds are integral current spaces.

We say a sequence of integral current spaces, $X_i$, converge to the integral current space, $X_{\infty}$, if $d_{\mathcal{F}}(X_i,X_{\infty}) \rightarrow 0$.

For the limiting integral current space to be well defined under a SWIF limit it is important for points to have positive lower density. See the discussion that follows equation \eqref{setDef} of the definition of the set of positive density of $T$ in subsection \ref{subsec: Currents Background}. In Example \ref{CuspEx} we see that when a sequence converges in the GH and SWIF sense they may not agree on the limit space due to disappearing points. Despite this fact we see that $Cl(X')=X$ in Theorem \ref{HLS-thm} for sequences with H\"{o}lder bounds even though this is not necessarily true in general (See Example 3.10 of Lakzian and Sormani \cite{Lakzian-Sormani-1}).

\section{Examples} \label{sec: Examples}

In this section we discuss examples which illustrate the necessity of the hypotheses of the main theorems. In most of the examples we consider warped products with warping factors,
\be
f:[a,b]\to \mathbb{R}^+,
\ee
$a,b \in \R$, where we define the warped product manifolds,
\be \label{MorN}
M=[a,b]\times_f S^1,
\ee
with coordinates $(r,\theta) \in M$ and warped product metric,
\be \label{eqn-g}
g= dr^2 + f^2(r) d\theta .
\ee

\subsection{Lack of Uniform Lipschitz Bound}\label{subsec: NoLipschitzBound}

Our fist example shows a sequence of Riemannian manifolds which converge in the uniform, GH and SWIF sense to the same limit manifold but which does not satisfy a uniform Lipschitz bound. Instead we will show that a uniform H\"{o}lder bound does exist and hence Theorem \ref{HLS-thm} will apply to give a converging subsequence.

\begin{ex}\label{NonUniform} 
We construct a sequence of smooth functions
$f_j:[-1,1]\to [1,\infty)$ which converges pointwise almost everywhere to $f_\infty=1$.
Define
  \be
 f_j(r)=
 \begin{cases}
  h_j(jr ) & r\in [-\frac{1}{j}, \frac{1}{j}] 
 \\ 1 & \textrm{ elsewhere }
 \end{cases}
\ee
where $h_j$ is a smooth even function such that 
$h_j(-1)=1$ with $h_j'(-1)=0$, 
increasing up to $h_j(\frac{-1}{2})=j^{\eta}+1$, $\eta \in(0,1)$ with $h_j'(\frac{-1}{2})=0$, constant on $[\frac{-1}{2},\frac{1}{2}]$ with $h_j'(\frac{1}{2})=0$, and then
decreasing back down to $h_j(1)=1$ with $h_j'(1)=0$. This defines the Riemannian warped product $M_j = [a,b]\times_{f_j}S^1$.
Consider also $M_\infty$ defined as above with $f_\infty(r)=1$, $r \in [-1,1]$.

We will show that, despite the fact that no uniform Lipschitz bound exists, a uniform H\"{o}lder bound does exist,
\begin{align}
d_j(p,q) \le C d_{\infty}(p,q)^{\beta},
\end{align}
where $\beta = 1 - \eta$, as well as a uniform upper bound on volume, and hence  $M_j \rightarrow M_{\infty}$ in uniform, GH  and SWIF sense.
 \end{ex}
 \begin{proof}
Now if we consider $p_j = \left ( -\frac{1}{8j^2},0\right)$, $q_j =  \left ( \frac{1}{8j^2}, \frac{1}{8j^2}\right)$ and $C_j(t) = \left(\frac{t}{8j^2},\frac{1}{16 j^2}(t+1) \right)$ for $t \in [-1,1]$ then we know that this is a geodesic since $f_j(r)$ is identically constant for $r \in [-\frac{1}{4j},\frac{1}{4j}]$. Now we calculate 
\begin{align}
L_{g_j}(C_j)&= \int_{-1}^1 \sqrt{\left(\frac{1}{8j^2}\right)^2 + (j^{\eta}+1)^2 \frac{1}{16^2 j^4}} dt 
\\&= 2 \sqrt{\left(\frac{1}{8j^2}\right)^2 +  \frac{1}{16^2} \frac{j^{2 \eta} + 2 j^{\eta} + 1}{j^4}} 
\\&=\frac{1}{8} \frac{\sqrt{j^{2 \eta}+2j^{\eta}+5}}{j^2}
\end{align}
which must be the minimum length of any curve which stays in the region where $r \in [-\frac{1}{4j},\frac{1}{4j}]$. We would like to show that this curve realizes the distance between $p_j$ and $q_j$ for $j$ chosen large enough. Consider another curve $\gamma_j = (r_j(t), \theta_j(t))$, with end points $p_j$ and $q_j$, which leaves the $r \in [-\frac{1}{4j},\frac{1}{4j}]$ region and then re-enters which has length
\begin{align}
L_{g_j}(\gamma_j)&= \int_{-1}^1 \sqrt{r'_j(t)^2 + (f_j\theta'_j(t))^2} dt \ge \int_{-1}^1|r'_j|dt \ge \frac{1}{4j}.
\end{align}
This shows for $j$ large enough that $L_{g_j}(C_j) < L_{g_j}(\gamma_j)$ and so $d_{g_j}(p_j,q_j) = L_{g_j}(C_j)$.
Now we compute 
\begin{align}
d_{g_{\infty}}(p_j,q_j) = 2\sqrt{\frac{1}{8^2j^4}+ \frac{1}{16^2j^4}} = 2\sqrt{\frac{1}{8^2}+ \frac{1}{16^2}}j^{-2} = \frac{\sqrt{5}}{8} j^{-2}
\end{align}
 which shows that,
\begin{align}
 \frac{d_{g_j}(p_j,q_j)}{d_{g_{\infty}}(p_j,q_j)}&= \frac{1}{\sqrt{5}}\sqrt{j^{2 \eta}+2j^{\eta}+5}\rightarrow \infty,
\end{align}
and hence no uniform Lipschitz bound exists.  Instead we  notice that 
\begin{align}
\frac{d_{g_j}(p_j,q_j)}{d_{g_{\infty}}(p_j,q_j)^{1 - \frac{\eta}{2}}}&= \frac{1}{\sqrt{5}}\frac{\sqrt{j^{2 \eta}+2j^{\eta}+5}}{j^{\eta}}\le C.
\end{align}
Now our goal is to show that a uniform H\"{o}lder bound holds for comparing distances between any point and hence Theorem \ref{HolderCompactness} applies to obtain the desired convergence. 

To this end, consider $p,q \in M$, $p=(r_1,\theta_1)$, $q=(r_2,\theta_2)$ and let $\gamma$ be a straight line curve in coordinates connecting $p$ to $q$. Then if either $r_1\not \in (-\frac{1}{j},\frac{1}{j})$, $r_2 \not \in (-\frac{1}{j},\frac{1}{j})$, or both then we can estimate the $d_j$ distance by a curve which travels solely in the $r$ direction followed by a curve which travels solely in the $\theta$ direction, such that the part of the curve which travels in the $\theta$ direction does not occur within the region where $r \in (\frac{-1}{j},\frac{1}{j})$, to notice that,
\begin{align}
d_j(p,q) \le |r_1-r_2| + |\theta_1-\theta_2|.
\end{align}
Then we can use the curve $\gamma$ to calculate the $d_{\infty}$ distance in order to show that
\begin{align}
\frac{d_j(p,q)}{d_{\infty}(p,q)^{1-\eta}} \le \frac{|r_1-r_2| + |\theta_1-\theta_2|}{(|r_1-r_2|^2 + |\theta_1-\theta_2|^2)^{\frac{1-\eta}{2}}} \le C.
\end{align}
Now we consider $p=(r_1,\theta_1)$, $q=(r_2,\theta_2)$ in the case where $r_1, r_2 \in (-\frac{1}{j},\frac{1}{j})$ in which case the curve $\gamma$ is contained in the region where $r \in (-\frac{1}{j},\frac{1}{j})$. In this case we estimate
\begin{align}
d_j(p,q) &\le L_{g_j}(\gamma) 
\\&\le \sqrt{|r_1-r_2|^2+(1 + j^{\eta})^2 |\theta_2-\theta_1|^2}
\\&\le |r_1-r_2|+(1 + j^{\eta}) |\theta_2-\theta_1|.
\end{align}

Now we use this estimate to calculate
\begin{align}
&\frac{d_j(p,q)}{d_{\infty}(p,q)^{1- \eta}}\le \frac{|r_1-r_2|+(1 + j^{\eta}) |\theta_2-\theta_1|}{\lp|r_1-r_2|^2+|\theta_2-\theta_1|^2\rp^{\frac{1}{2}-\frac{\eta}{2}}}
\\&\le \frac{|r_1-r_2|+(1 + j^{\eta}) |\theta_2-\theta_1|}{|r_1-r_2|^{1 - \eta}} 
\\&= \frac{|r_1-r_2|^{1+\eta}+(1 + j^{\eta}) |r_1-r_2|^{\eta}|\theta_2-\theta_1|}{|r_1-r_2|} \label{Before}
\\&\le \frac{|r_1-r_2|^{1+\eta}+2\frac{(1 + j^{\eta})}{j^{\eta}} |\theta_2-\theta_1|}{|r_1-r_2|} \le C \label{After}
\end{align}
where we used that $|r_1-r_2| \le \frac{2}{j}$ to go from \eqref{Before} to \eqref{After}.

Hence we have shown a uniform H\"{o}lder bound from above. Notice that a uniform Lipschitz bound from below follows immediately from the definition of $g_j$ and we can find a uniform volume bound
\begin{align}
Vol(M_j) \le \frac{4\pi(j^{\eta}+1)}{j}+4\pi\left(1 - \frac{1}{j}\right) \le 16 \pi.
\end{align}
Hence Theorem \ref{HLS-thm} applies to show that a subsequence converges in the uniform, GH, and SWIF sense, as desired.

 To show specifically that the subsequence converges to $M_{\infty}$ we will show pointwise convergence of distances. Note that if $p=(r_1,\theta_1)$, $q=(r_2,\theta_2)$, and $r_1,r_2 \in [-1,0)$ or $r_1,r_2 \in (0,1]$ then the pointwise convergence follows by choosing $j$ large enough so that the straight line curve in coordinates joining $p$ to $q$ avoids the region where $r \in (\frac{-1}{j},\frac{1}{j})$. 
 
 In the case where $r_1 \in [-1,0]$ and $r_2\in(0,1]$, or $r_1 \in [-1,0)$ and $r_2\in [0,1]$ then we can consider a sequence of approximating curves, $C_j$, which travel along a straight line until the region where $r \in (\frac{-1}{j},\frac{1}{j})$ and then travel solely in the $r$ direction. We can use these approximating curves to estimate
 \begin{align}
 d_j(p,q) \le L_{g_j}(C_j) \le \sqrt{\lp|r_2-r_1| - \frac{c}{j}\rp^2 + |\theta_2-\theta_1|^2} + \frac{c}{j},
 \end{align}
 for some constant $c>0$, which when combined with the lower bound on distance implied by the fact that $g_j \ge g_{\infty}$ implies pointwise convergence of distances in this case. 
 
 In the case where $r_1,r_2 = 0$ we can build a sequence of approximating curves $C_j'$ which move $\frac{2}{j}$ in the $r$ direction, followed by traveling solely in the theta direction outside the region where $r \in \left(\frac{-1}{j},\frac{1}{j} \right)$, followed by a curve which moves $\frac{2}{j}$ in the $r$ direction. We can use these approximating curves to estimate 
  \begin{align}
 d_j(p,q) \le L_{g_j}(C_j') \le |\theta_2-\theta_1| + \frac{4}{j},
 \end{align}
 which when combined with the lower bound on distance implied by the fact that $g_j \ge g_{\infty}$ implies pointwise convergence of distances in this case. Hence we have shown pointwise converge of distances which when combined with Theorem \ref{HLS-thm} shows that every subsequence of $M_j$ converges to $M_{\infty}$ with respect to uniform, GH and SWIF convergence and hence the sequence must converge as well.

\end{proof}

\subsection{Lack of Uniform H\"{o}lder Bound}\label{subsec: NonHolderBound}

In the next example a uniform upper H\"{o}lder bound does not exist and so we see that the sequence converges pointwise to a limiting metric space which is totally disconnected but does not converge uniformly.

\begin{ex}\label{NonHolderBoundAboveEx}
Consider the sequence of metric spaces $([0,1],d_j)$ with distance functions
\begin{align}
d_j(x,y) = |x-y|^{1/j},
\end{align}
then we see that $d_j$ converges pointwise as a function to the metric $d_{\infty}$ defined as
\begin{align}
d_{\infty}(x,y)=
\begin{cases}
1 & \text{ if } x \not = y
\\0 & \text{ if } x = y
\end{cases}
\end{align}
but $d_j$ does not converge uniformly to $d_{\infty}$ and $([0,1],d_{\infty})$ is in fact totally disconnected.
\end{ex}

\subsection{Lack of Uniform Inverse H\"{o}lder Bound}\label{NoInverseHolderEx}

Notice that the lower H\"{o}lder bound of Theorem \ref{HLS-thm} is equivalent to saying that the inverse identity map is H\"{o}lder continuous. In the next example we see that if we do not assume a uniform inverse H\"{o}lder bound then it is possible for points to be identified in the limit, thus changing the topology of the space on which the limiting metric is defined. This example first appeared as Example 3.4 in the work of Lakzian and Sormani \cite{Lakzian-Sormani-1}.

\begin{ex}
Consider the sequence of warped product metrics $g_i$ on the sphere, $S^2$, with warping functions which are carefully defined in Example 3.4 of \cite{Lakzian-Sormani-1}, which converge smoothly away from the equator such that the equator pinches to zero. The GH and SWIF limits of this sequence are both two spheres joined at a point which is the identification of the equator. Here we notice that because of the lack of a uniform inverse H\"{o}lder bound on the sphere the sequence is able to identify the equator as one point in the limit since the distances between all points on the equator are converging to $0$. 
\end{ex}

\subsection{Cusps and Disappearing Points}\label{subsec: CuspsEx}
This next example first appeared as Example A.9. in Section 6 of the work of Sormani and Wenger \cite{SW-JDG}. An explicit construction was later given as Example 3.3 in the work of Sormani and Lakzian \cite{Lakzian-Sormani-1}. This example illustrates how the GH and SWIF limits guaranteed by Theorem \ref{SobolevBoundsAndHolderCompactness}, Theorem \ref{HLS-thm} and Theorem \ref{HolderCompactness} can disagree due to disappearing points. 

\begin{ex}\label{CuspEx}
Consider the warped product $g_j$ with warping function
\begin{align}
f_j(r) = (1/j) \sin(r) +(1-1/j) f(r) \text{ for } r \in [0,\pi]
\end{align}
where $f(r)$ is a smooth function defined by
\begin{align}
f(r)=
\begin{cases}
\sin(r) & \text{ for } r \in [0,\pi/2],
\\\frac{4}{\pi^2}(r-\pi)^2 & \text{ for } r \in [3 \pi/4,\pi].
\end{cases}
\end{align}
This is a sequence of metrics on the sphere $S^2$ which converges smoothly away from the point singularity $p_0 = (\pi,0)$. Then we find
\begin{align}
M_j &\GHto (S^2,d_{\infty})
\\M_j &\Fto (S^2 \setminus \{p_0\},d_{\infty})),
\end{align}
where $d_{\infty}$ is the spherical metric with a cusp tip.
This discrepency occurs due to a cusp singularity forming at the point $p_0$ which has density $0$ and hence is not included in the settled completion of the limit in the case of SWIF convergence.
\end{ex}

In Example \ref{CuspEx} we saw a cusp forming at one point and hence this point was not included in the SWIF limit space. One can now imagine placing as many cusp points as one likes to create a similar example illustrating the fact that under Theorem \ref{SobolevBoundsAndHolderCompactness} and Theorem \ref{HolderCompactness} the GH and SWIF limits can disagree.

\subsection{Cones and Lack of Smooth Convergence} \label{subsec: ConeEx}

This next example first appeared as Example A.8. in Section 6 of the work of Sormani and Wenger \cite{SW-JDG}. An explicit construction was later given as Example 3.2 in the work of Sormani and Lakzian \cite{Lakzian-Sormani-1}. This example shows that even when the GH and SWIF limits agree we should not expect smooth convergence in the conclusion of Theorem \ref{SobolevBoundsAndHolderCompactness} and Theorem \ref{HolderCompactness}.

\begin{ex}
Consider the warped product metrics $g_j$ with warping functions
\begin{align}
f_j(r) = (1/j) \sin(r) + (1 - 1/j) f(r) \text{ for } r \in [0,\pi]
\end{align}
where we define $f$ to be a smooth function where
\begin{align}
f(r)=
\begin{cases}
\sin(r) & \text{ for } r \in [0,\pi/2]
\\ -\frac{2}{\pi}(r - \pi) & \text{ for } r \in [3\pi/4,\pi].
\end{cases}
\end{align}
$M_j$ is then a sequence of metrics on the sphere $S^2$ which converges smoothly away from the point $p_0 = (\pi,0)$. Then we find that
\begin{align}
M_j &\GHto (S^2,d_{\infty}),
\\M_j &\Fto (S^2,d_{\infty}),
\end{align}
where $d_{\infty}$ is the spherical metric with a cone tip. The cone tip demonstrates that $M_j$ does not converge smoothly to $M_{\infty}$.
\end{ex}

\subsection{Necessity of Metric Lower Bounds} \label{subsec: MetricLowerBoundsEx}
Example \ref{Cinched-Torus} first appeared as Example 3.4 of the work of the first named author and Sormani \cite{Allen-Sormani-I} which demonstrates that without a metric lower bound like \eqref{MetricTensorLowerBound1} which holds pointwise everywhere one should not expect the lower bound on the limiting metric as in \eqref{LimitingMetricBounds}.

\begin{ex} \label{Cinched-Torus}  
 Consider the sequence of smooth functions $f_j(r):[-\pi,\pi]\to [1,2]$ 
 \be
 f_j(r)=
 \begin{cases}
 1 & r\in[-\pi,- 1/j]
 \\  h(jr) & r\in[- 1/j, 1/j]
 \\ 1 &r\in [1/j, \pi]
 \end{cases}
\ee
where $h$ is a smooth even function such that 
$h(-1)=1$ with $h'(-1)=0$, 
decreasing to $h(0)=h_0\in (0,1]$ and then
increasing back up to $h(1)=1$, $h'(1)=0$. 
Note that this defines a sequence of smooth Riemannian metrics, $g_j$,
as in (\ref{eqn-g}), with corresponding distances $d_j$ on the manifolds, 
\be
M_j= [-\pi,\pi]\times_{f_j} \Sigma \textrm{ or } N_j={\mathbb{S}}^1\times_{f_j} \Sigma
\ee 
for any fixed Riemannian manifold $\Sigma$.   Consider also
$M_\infty$ and $N_\infty$ defined as above with $f_\infty(r)=1 \quad \forall r$.

Despite the fact that
\be
f_j \to f_\infty \textrm{ in } L^p \quad 
\ee
we do not have $M_j$ converging to $M_\infty$ nor $N_j$ to $N_\infty$
in the GH or $\mathcal{F}$ sense.  In fact
\be
M_j \GHto M_0 \textrm{ and } M_j \Fto M_0
\ee
and
\be
N_j \GHto N_0 \textrm{ and } N_j \Fto N_0
\ee
where $M_0$ and $N_0$ are warped metric spaces defined as in (\ref{MorN})
with warping factor
\be
 f_0(r)=
 \begin{cases}
 1 & r\in[-\pi, 0)
 \\  h_0  & r=0
 \\ 1 &r\in (0, \pi] .
 \end{cases}
\ee
\end{ex}

\subsection{Necessity of $k=n-1$ for $1 \le p < \frac{n}{n-1}$ in the Trace Inequality}\label{subsec:TraceEx}

The next example demonstrates that the conclusion of Theorem \ref{SobolevTraceTrick} does not hold for integer $k$ less than $n-1$ and exponent $p$ in the range $1\le p<\frac{n}{n-1}$. 

\begin{ex}
 Consider the Riemannian manifold $(D,\delta)$, where $D$ is the two dimensional unit disk centered at the origin and $\delta$ is the flat metric. Let $\gamma$ be a vertical line segment in $D$ of length $\frac{1}{2}$ defined by
\begin{equation}
\gamma(t)=(0,t)\text{ for $t$ }\in[-1/4,1/4].
\end{equation}
We define the function $f$ as follows. Let
\begin{equation}
f(w)=\log\left(d(w,\gamma)\right) \text{ for w }\in\lbrace(x,y):0\le x\le\frac{1}{4},\abs{y}\le\frac{1}{4}\rbrace
\end{equation}
and zero otherwise. We calculate that
\begin{equation}
\int_{D}\abs{f}^q<\infty.
\end{equation}
However, we see that $f$ can not have a trace on $\gamma$ with finite $L^{p}$ norm for any $p$. This example can be generalized to higher dimensional situations. The main ingredient is to take some function of the distance from points $x$ to the line segment $\gamma$. 
\end{ex} 

\section{$W^{n-1,p}$ Bounds and the Trace Inequality}\label{sec:Trace}

In this section we show how to satisfy the assumption \eqref{MetricTensorUpperBound} of Theorem \ref{HolderCompactness} by assuming a bound on the norm of the metric in $W^{n-1,p}$. This is ultimately used to satisfy the upper bound in \eqref{d_j1} of Theorem \ref{HLS-thm}. We start by proving a Lemma which will be used in Lemma \ref{BaseCase}. The goal is to prove for compact manifolds a similar result to Lemma 1.1.4 and Lemma 1.1.5 of \cite{MS-SobolevMultipliers} which hold on $\R^n$. 
\begin{lem}\label{SetLem}
Let $(M,g)$ be an n-dimensional closed Riemannian manifold and let $\Omega$ be an open set with smooth boundary. There exist constants $d$, $C$, and $P$, which depend only on $(M,g)$, such that for every point $x$ in $\Omega$, we have one of the following: either there exists a number $0<r_x < d$ such that
	\begin{equation}\label{1Option}
		\mathcal{H}^{n-1}_{g}\left(B_{g}(x,r_x)\cap\partial\Omega\right)\ge Cr^{n-1}_{x}	
	\end{equation} 
	or we have
	\begin{equation}\label{2Option}
		\mu_{g}(B_{g}(x,d)\cap\Omega)\ge P d^{n},
	\end{equation}
	where $\mu_{g}$ is the measure induced by $g$ and $\mathcal{H}^{n-1}_{g}$ is the Hausdorff measure induced by $g$.
\end{lem}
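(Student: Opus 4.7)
The natural strategy is a case split based on $\rho := \dist_g(x, \partial \Omega)$: if $x$ is far from the boundary, the full ball of radius $d$ sits inside $\Omega$ and delivers the volume lower bound; if $x$ is close, then $\partial \Omega$ looks locally like a hyperplane through a nearest point and accounts for the required $(n-1)$-dimensional Hausdorff measure.

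First I would fix a single small constant $d$ obeying three properties: (i) $d$ is below the injectivity radius of $M$, so exponential maps yield normal coordinates in which $C_1^{-1} g_{\mathrm{Euc}} \le g \le C_1\, g_{\mathrm{Euc}}$ with a constant $C_1$ uniform in the basepoint; (ii) the closed $d$-tubular neighborhood of $\partial \Omega$ admits a smooth nearest-point projection, and in normal coordinates centered at any $y \in \partial \Omega$ the hypersurface $\partial \Omega$ is the graph of a smooth function $\varphi_y$ over $T_y \partial \Omega$ with $\varphi_y(0)=0$, $D\varphi_y(0)=0$, and $\|\varphi_y\|_{C^2}\le K$ for some $K$ independent of $y$ (this uniformity comes from compactness and smoothness of $\partial \Omega$); (iii) the uniform volume lower bound $\mu_g(B_g(x,r)) \ge P_0\, r^n$ holds for every $x \in M$ and every $r \le d$, which is standard on a compact Riemannian manifold.

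With this setup, Case 1 is $\rho \ge d$: then $B_g(x,d) \subset \Omega$, and (iii) gives $\mu_g(B_g(x,d) \cap \Omega) \ge P_0\, d^n$, so \eqref{2Option} holds with $P = P_0$. Case 2 is $\rho < d$: pick $y \in \partial \Omega$ with $\dist_g(x,y) = \rho$ and set $r_x := 2\rho$. The triangle inequality gives $B_g(x, r_x) \supset B_g(y, \rho)$, which by (i) contains a Euclidean ball $B^{\mathrm{Euc}}(y, c\, r_x)$ in normal coordinates centered at $y$, for some $c$ depending only on $C_1$. In these coordinates $\partial \Omega$ is graphed by $\varphi_y$, and the $C^2$ bound forces $|\varphi_y(z')| \le K |z'|^2 / 2$. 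Hence the graph of $\varphi_y$ over the flat disk $\{|z'| \le c\, r_x/2\}$ fits inside $B^{\mathrm{Euc}}(y, c\, r_x)$ once $d$ is sufficiently small, and its $(n-1)$-Hausdorff measure in the metric $g$ is bounded below by a fixed multiple of the Euclidean area of that disk, which is of order $r_x^{n-1}$. This yields \eqref{1Option} with a uniform constant $C$.

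The main obstacle will be item (ii): securing a uniform graph representation of $\partial \Omega$ in normal coordinates at every boundary point. This is a standard tubular neighborhood plus implicit function theorem argument, but the uniformity in $y$ must be extracted from a compactness argument using that $\partial \Omega$ is a smoothly embedded closed hypersurface of the compact manifold $M$. Everything else is bookkeeping: tracking the metric distortion $C_1$ in normal coordinates, arranging the various smallness requirements on $d$ to hold simultaneously, and applying volume comparison on $(M,g)$ for (iii).
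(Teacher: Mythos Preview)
Your approach has a genuine gap: the constants you produce depend on $\Omega$, not only on $(M,g)$. The statement demands that $d$, $C$, $P$ be chosen once and for all open sets $\Omega$ with smooth boundary, and this is essential for the application downstream, where the lemma is invoked on the level sets $\{|u|>t\}$ of an arbitrary smooth function --- sets whose boundary geometry can be as wild as one likes. Your item (ii), the tubular neighborhood of width $d$ and the uniform $C^2$ graph bound $K$, is extracted from compactness of the \emph{particular} hypersurface $\partial\Omega$; both $d$ and $K$ are controlled by the second fundamental form of $\partial\Omega$. For a fixed $(M,g)$ there is no universal lower bound on the tubular radius of an arbitrary smooth hypersurface and no universal $C^2$ bound on its local graph representation (think of a smooth domain in $\mathbb{R}^2$ bounded by a curve of curvature $10^{6}$). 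So your $d$ and $C$ are not uniform in $\Omega$, and the lemma as stated is not proved.

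The paper avoids this by never inspecting the local geometry of $\partial\Omega$. If option \eqref{2Option} fails at $x$, one passes to Euclidean normal coordinates (distortion controlled only by $(M,g)$) and observes that the relative density $r\mapsto \mu_\delta(B_\delta(x,r)\cap\Omega)/\mu_\delta(B_\delta(x,r))$ is continuous, equals $1$ for small $r$ since $x\in\Omega$, and drops below $1/2$ at the fixed scale $\tilde d$. By the intermediate value theorem there is some $r_x$ where it equals exactly $1/2$, and at that scale the \emph{Euclidean} relative isoperimetric inequality (Lemma~1.1.4 of Maz'ya--Shaposhnikova) gives $\mathcal{H}^{n-1}_\delta(B_\delta(x,r_x)\cap\partial\Omega)\ge c\,r_x^{n-1}$ with a constant $c=c(n)$ depending on nothing but dimension. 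Transferring back to $g$ costs only the metric-comparison constant. No quantitative information about $\partial\Omega$ enters beyond smoothness, which is precisely why the constants are uniform in $\Omega$.

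A secondary, fixable issue: your choice $r_x=2\rho$ only guarantees $r_x<2d$, not $r_x<d$, when $d/2\le\rho<d$. This is easy to repair and not the real obstruction.
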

\begin{proof}
 Let $r_{M}$ be the injectivity radius of the $(M,g)$. Consider normal coordinates on a geodesic ball of radius $\frac{r_{M}}{2}$ about the point $x$. Let $\delta$ denote the flat metric on a ball given by normal coordinates. We may see that there is a constant $C_{x}\ge 1$ such that 
\begin{equation}\label{g not so different from Euclidean}
	\frac{1}{C_{x}^2}\le\frac{g(v,v)}{\delta(v,v)}\le C_{x}^2,  \: \forall q \in B\left( x, \frac{r_M}{2}\right), \forall v \in T_qM.
\end{equation}
Compactness allows us to choose just finitely many normal neighborhoods $\{B_g(x_1,r),...,B_g(x_k,r)\}$ and a constant $C \ge 1$ such that
\begin{align}\label{MetricComparison}
\frac{1}{C^2} \le \frac{g(v,v)}{\delta(v,v)}\le C^2,  \: \forall q \in B_g(x_i,r), \forall v \in T_qM,
\end{align}
holds on each normal neighborhood $B_g(x_i,r)$, $1 \le i \le k$ where $\delta$ the flat metric induced by normal coordinates.
Now if we let $\bar{r}$ be the Lebesgue number of the cover $\{B_g(x_1,r),...,B_g(x_k, r)\}$, which only depends on $(M,g)$, we know for each $r' \le \bar{r}$ that $B_g(x,r')$ is contained in some $B_g(x_i, r)$, $1 \le i \le k$ and hence \eqref{MetricComparison} applies on $B_g(x,r')$. 

Now we let,
\begin{align}
	d=\min\left\{\frac{r_{M}}{8},\bar{r}\right\},
\end{align}
and calculate that for $\tilde{d} = C d$ we find
\begin{align}\label{BallComparison}
\mu_{\delta}(B_{\delta}(x,\tilde{d}) \cap \Omega) \le \mu_{\delta}(B_g(x,d)\cap \Omega),
\end{align}
since by \eqref{MetricComparison},
\begin{align}\label{BallComparisonDeltag}
B_{\delta}(x,\tilde{d}) = B_{\delta}(x,C d)= B_{C^2 \delta}(x,d) \subset B_g(x,d). 
\end{align}
Now we define the set,
\begin{align}\label{FsetDefinition}
\mathcal{F} = \{B_g(x,d): x \in \Omega \text{ and } \mu_g(B_g(x,d)) \cap \Omega) < P d^n \},
\end{align}
and so for $B_g(x,d) \in \mathcal{F}$ we can calculate,
\begin{align}\label{MeasureComparedd^n}
\mu_{\delta}(B_g(x,d) \cap \Omega) \le C^{n} \mu_g(B_g(x,d)\cap \Omega) <  C^{n} P d^n.
\end{align}
But, by definition we know that $\tilde{d}^n = C^n d^n$ and hence we can choose $P$ to be so small so that 
\begin{align}\label{ChoiceofK}
P\tilde{d}^n < \frac{1}{2} \mu_{\delta}(B_{\delta}(x,\tilde{d})),
\end{align} 
which can be chosen independently of $x$ by the discussion following \eqref{MetricComparison}.
Thus, we have by combining \eqref{BallComparison}, \eqref{MeasureComparedd^n}, and \eqref{ChoiceofK} that,
\begin{align}\label{HalfMeasure}
2 \mu_{\delta}(B_{\delta}(x,\tilde{d} )\cap \Omega) < \mu_{\delta}(B_{\delta}(x,\tilde{d})).
\end{align}
Now notice that,
\begin{align}
r \mapsto \frac{\mu_{\delta}(B_{\delta}(x,r) \cap \Omega)}{\mu_{\delta}(B_{\delta}(x,r))},
\end{align}
is continuous and equal to $1$ for small enough $r$ and less than $\frac{1}{2}$ for $\tilde{d}$ so we must have that there exists a $0 < r_x < d$ so that,
\begin{align}
\mu_{\delta}(B_{\delta}(x,r_x) \cap \Omega) = \frac{1}{2} \mu_{\delta}(B_{\delta}(x,r_x)).
\end{align}
We may now apply Lemma 1.1.4 of \cite{MS-SobolevMultipliers} to observe that
\begin{align}\label{AlmostDesiredConclusion}
\mathcal{H}_{\delta}^{n-1}(B_{\delta}(x,r_x) \cap \Omega) \ge c r_x^{n-1},
\end{align}
and by using \eqref{MetricComparison} we find,
\begin{align}\label{DesiredConclusion}
\mathcal{H}_g^{n-1}(B_g(x,r_x) \cap \Omega) \ge c r_x^{n-1}.
\end{align}
Hence for every $B(x,d) \in \mathcal{F}$ we can observe \eqref{DesiredConclusion} to achieve \eqref{1Option} and if $B(x,d) \not \in \mathcal{F}$ then \eqref{2Option} is automatically satisfied and we have the desired result.
\end{proof}

We now prove an important Lemma which will be used to prove the main theorem of this section.

\begin{lem}\label{BaseCase}
Let $(M,g)$ be a compact Riemannian $n-$manifold, let $\mu_g$ be the volume measure induced by the metric, and let $\mathcal{H}^{n-1}_g$ denote the $n-1$ Hausdorff measure induced by the metric $g$. For any Borel measure, $\mu$, we have
\begin{equation}
\int|u|d\mu\le C\sup_{\lbrace r>0,x\in M\rbrace} \left \lbrace r^{1-n}\mu\left(B(x,r)\right)\right\rbrace ||u||_{W^{1,1}_{g}(M)},\label{BaseSobolev}
\end{equation}
where $C$ depends on the geometry of $(M,g)$.
\end{lem}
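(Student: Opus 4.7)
My approach combines the layer-cake formula, Lemma~\ref{SetLem}, a Vitali covering, and the coarea formula. By density I may assume $u$ is a nonnegative smooth function. Write $\Omega_t := \{u > t\}$ and set $K := \sup_{r>0,\,x\in M} r^{1-n}\mu(B(x,r))$, so the claim becomes $\int u\,d\mu \le C K \|u\|_{W^{1,1}_g(M)}$. Cavalieri's principle gives
\begin{equation*}
\int u\,d\mu = \int_0^\infty \mu(\Omega_t)\,dt,
\end{equation*}
and by Sard's theorem, for a.e.\ $t>0$ the boundary $\partial \Omega_t$ is a smooth hypersurface, so Lemma~\ref{SetLem} applies to $\Omega_t$.

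The crux is a covering estimate valid once $\Omega_t$ has small volume. Markov's inequality gives $\mu_g(\Omega_t) \le \|u\|_{L^1_g}/t$, so setting $T := \|u\|_{L^1_g}/(Pd^n)$, with $P,d$ the constants from Lemma~\ref{SetLem}, we have $\mu_g(\Omega_t) < Pd^n$ for every $t > T$. In this regime alternative \eqref{2Option} fails at every $x \in \Omega_t$, hence \eqref{1Option} provides some $r_x < d$ with $\mathcal{H}_g^{n-1}(B(x,r_x) \cap \partial \Omega_t) \ge C r_x^{n-1}$. The Vitali $5r$-covering lemma then yields a disjoint countable subfamily $\{B(x_i,r_{x_i})\}$ whose $5$-dilates cover $\Omega_t$, and so
\begin{equation*}
\mu(\Omega_t) \le \sum_i \mu(B(x_i,5r_{x_i})) \le 5^{n-1}K\sum_i r_{x_i}^{n-1} \le \frac{5^{n-1}K}{C}\sum_i \mathcal{H}_g^{n-1}\bigl(B(x_i,r_{x_i})\cap \partial\Omega_t\bigr) \le \frac{5^{n-1}K}{C}\,\mathcal{H}_g^{n-1}(\partial \Omega_t),
\end{equation*}
using disjointness of the selected balls in the final inequality.

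To finish, split the layer-cake integral at $T$. For $t \le T$ use the trivial bound $\mu(\Omega_t) \le \mu(M) \le K(\diam M)^{n-1}$, obtained by taking $r = \diam M$ in the supremum defining $K$. For $t > T$ use the estimate above together with the coarea formula $\int_0^\infty \mathcal{H}_g^{n-1}(\partial \Omega_t)\,dt = \int_M |\nabla u|\,d\mu_g$. Adding the two pieces,
\begin{equation*}
\int u\,d\mu \le T K(\diam M)^{n-1} + \frac{5^{n-1}K}{C}\int_M |\nabla u|\,d\mu_g \le C' K \|u\|_{W^{1,1}_g(M)},
\end{equation*}
with $C'$ depending only on the geometry of $(M,g)$.

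The main subtlety is calibrating the threshold $T$ against the constant $P$ in Lemma~\ref{SetLem} so that alternative \eqref{2Option} is genuinely excluded on the super-level sets; once this is arranged, the rest is the classical Maz'ya capacitary argument. Minor routine points are the smooth approximation of $u$, the application of Sard's theorem, and the $5r$-covering in the Riemannian setting, all of which are standard since the radii $r_x$ are uniformly bounded by $d$.
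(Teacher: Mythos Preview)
Your argument is correct and reaches the same conclusion, but the route differs from the paper's in a meaningful way. The paper first establishes the uniform capacitary inequality
\[
\mu(\Omega)\le C\Bigl(\sup_{r>0,\,x\in M} r^{1-n}\mu(B(x,r))\Bigr)\bigl(\mu_g(\Omega)+\mathcal{H}^{n-1}_g(\partial\Omega)\bigr)
\]
for \emph{every} open set $\Omega$ with smooth boundary, and only then integrates over level sets. To get this, both alternatives of Lemma~\ref{SetLem} must be handled simultaneously: the paper covers $\Omega$ by the union of two families of balls (one for each alternative) and applies the Besicovitch covering theorem, using the $\mu_g$-term to absorb the contribution from alternative~\eqref{2Option}.

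You instead split the layer-cake integral at the threshold $T=\|u\|_{L^1_g}/(Pd^n)$, so that for $t>T$ Markov's inequality forces $\mu_g(\Omega_t)<Pd^n$ and alternative~\eqref{2Option} is ruled out at every point; then only alternative~\eqref{1Option} is needed and the simpler Vitali $5r$-lemma with a single family suffices. The small-$t$ contribution is handled by the crude bound $\mu(M)\le K(\diam M)^{n-1}$, which produces the $\|u\|_{L^1_g}$ term directly. Your approach is a bit more elementary (Vitali rather than Besicovitch, one family rather than two), while the paper's yields the intermediate isoperimetric-type estimate above as a standalone statement, which is closer to Maz'ya's original capacitary formulation and potentially reusable.
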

\begin{proof}
We begin by establishing the lemma for smooth functions. Let $u\in C^{\infty}(M)$ and
\begin{equation}
N_{t}(u)=\lbrace|u|>t\rbrace.
\end{equation} 
It is an elementary fact from measure theory that
\begin{equation}
\int|u|d\mu=\int_{0}^{\infty}\mu\left(N_{t}(u)\right)dt.
\end{equation}
If we let 
\begin{equation}
K=\sup_{\Omega}\frac{\mu(\Omega)}{\mu_{g}(\Omega)+\mathcal{H}^{n-1}_{g}(\partial\Omega)},
\end{equation}
then we see that
\begin{equation}
\int_{0}^{\infty}\mu\left(N_{t}(u)\right)\le K\int_{0}^{\infty}\mu_{g}\left(N_{t}(u)\right)+\mathcal{H}^{n-1}_{g}\left(\partial N_{t}(u)\right)dt.
\end{equation}
From the co-area formula, we may deduce that
\begin{equation}
\int_{0}^{\infty}\mu_{g}\left(N_{t}(u)\right)+\mathcal{H}^{n-1}_{g}\left(\partial N_{t}(u)\right)dt=||u||_{W^{1,1}_{g}(\Omega)}\le ||u||_{W^{1,1}_{g}(M)}.
\end{equation}
Thus, if we can show that
\begin{equation}
K\le C\sup_{\lbrace r>0,x\in M\rbrace} \left \lbrace r^{1-n}\mu\left(B(x,r)\right)\right\rbrace ,
\end{equation}
then we have proven the lemma.
Let $d$ be defined by
\begin{equation}
d=\frac{1}{8}(\text{injectivity radius of }(M,g)).
\end{equation}
Now, given an open subset $\Omega$, by Lemma \ref{SetLem} we consider two families of balls,
\begin{align}
\mathcal{F}_{1}&=\lbrace B_g(x,d):\mu_g(B_g(x,d)\cap \Omega) \ge P d^n\rbrace,\\
\mathcal{F}_{2}&=\lbrace B(x,r_x):r_x < d \text{ and } \mathcal{H}_g^{n-1}(B_g(x,r_x) \cap \partial \Omega) \ge c r_x^{n-1}\rbrace.
\end{align}
Using the Besicovitch covering theorem, we may find a countable collections $\lbrace B_g(x_i,d)\rbrace\subset \mathcal{F}_1$ and $\lbrace B_g(x_j,r_{x_j})\rbrace\subset \mathcal{F}_2$ such that elements in $\lbrace B_g\left(x_i,d\right)\rbrace\cup\lbrace B_g\left(x_j,r_{x_j}\right)\rbrace$ are mutually disjoint and
\begin{equation}
\Omega\subset\bigcup_{i=1}^{\infty} B_{g}(x_i,3d)\cup\bigcup_{j=1}^{\infty} B_{g}(x_{j},3r_{x_j})
\end{equation}
We now calculate
\begin{equation}\label{FirstMeasureEst}
\mu(\Omega)\le\sum_{i=1}^{\infty}\mu(B(x_i,3d))+\sum_{j=1}^{\infty}\mu\left(B(x_{j},3r_{x_j})\right).
\end{equation}
Now we also notice that for $B_g(x,d) \in \mathcal{F}_1$,
\begin{align}
\mu(B_g(x,d)) &\le \sup_{\lbrace r > 0, c \in M\rbrace} \left \lbrace r^{1-n}\mu (B_g(c,r))\right\rbrace d^{n-1} \label{PrevMeasureIneq}
\\&\le \sup_{\lbrace r > 0, c \in M\rbrace} \left \lbrace r^{1-n}\mu (B_g(c,r))\right\rbrace \frac{1}{d} d^n 
\\&\le \sup_{\lbrace r > 0, c \in M \rbrace} \left \lbrace r^{1-n}\mu (B_g(c,r))\right\rbrace \frac{1}{P d} \mu_g(B_g(x,d)\cap \Omega)
\\&\le \sup_{\lbrace r > 0, c \in M \rbrace} \left \lbrace r^{1-n}\mu (B_g(c,r))\right\rbrace \frac{1}{P d} \mu_g(B_g(x,d)).\label{MeasureIneq}
\end{align}
It follows by combining Lemma \ref{SetLem}, \eqref{PrevMeasureIneq}, and \eqref{MeasureIneq} that the right side of \eqref{FirstMeasureEst} is bounded by
\begin{align}
&\sup_{\lbrace r>0,x\in M \rbrace}\left\lbrace\frac{\mu\left(B_g(x,r)\right)}{r^{n-1}}\right\rbrace\left(C\sum_{i=1}^{\infty}\frac{1}{3d}\mu_{g}(B_g(x_i,3d))+\sum_{j=1}^{\infty}(3r_{x_j})^{n-1}\right)
\\& \le C_1\sup_{\lbrace r>0,x\in M\rbrace}\left\lbrace\frac{\mu\left(B_g(x,r)\right)}{r^{n-1}}\right\rbrace
\\&\cdot \left(\sum_{i=1}^{\infty}\frac{1}{d}\mu_{g}(B_g\left(x_i,d\right))+c3^{n-1}\sum_{j=1}^{\infty} \mathcal{H}^{n-1}_{g}\left(B_g\left(x_j,r_{x_{j}} \right)\cap \partial \Omega\right)\right)\label{Doubling}
\\&\le C_2\sup_{\lbrace r>0,x\in M\rbrace}\lbrace r^{1-n}\mu(B_g(x,r))\rbrace \left(\mu_{g}(\Omega)+\mathcal{H}^{n-1}_{g}(\partial\Omega)\right),
\end{align}
where we have used the doubling property of $\mu_g$ in \eqref{Doubling}. This completes the proof. 
\end{proof}

\begin{rmrk}
Notice that the supremum in \eqref{BaseSobolev} of Lemma \ref{BaseCase} can be infinite in general.
\end{rmrk}

We now use this result to prove the following Theorem which is a restatement of Theorem \ref{SobolevTraceTrick}.

\begin{thm}\label{MainSobolevThm}
	Let $(K,g)$ be a smooth compact Riemannian $n$-dimensional manifold, possibly with boundary, and let $p\ge 1$. There exists a constant, denoted $C=C(g,K)$, such that
	\begin{equation}
		\left(\int_{\gamma}\abs{f}^pdt\right)^{\frac{1}{p}}\le C\abs{\abs{f}}_{W^{n-1,p}_{g}(K)},
	\end{equation}
	for length minimizing geodesics $\gamma$ with respect to $g$. 
\end{thm}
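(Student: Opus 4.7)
The plan is to combine Lemma \ref{BaseCase} with a sharp growth bound for the arc-length measure of a length-minimizing geodesic, and then to bootstrap from one derivative to $n-1$.

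\smallskip
\noindent\emph{Step 1 (reduction to a local chart).} By density of $C^{\infty}(K)$ in $W^{n-1,p}_{g}(K)$, I would assume $f$ is smooth. Cover $K$ by finitely many normal coordinate neighborhoods on which $g$ is uniformly bilipschitz to the Euclidean metric, and use a subordinate partition of unity to reduce to $f$ supported in a single chart. The geodesic $\gamma$ may leave such a chart, but it can be split into a number of sub-arcs, bounded in terms of $(K,g)$ only, each contained in a single chart; by subadditivity it suffices to prove the inequality on each such sub-arc.

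\smallskip
\noindent\emph{Step 2 (geometric growth of the arc-length measure).} Parametrize $\gamma:[0,L]\to K$ by arc length and let $\mu_{\gamma}$ be the pushforward of Lebesgue measure, so $\int_{\gamma}|f|^{p}\,dt=\int_{K}|f|^{p}\,d\mu_{\gamma}$. Length-minimality forces $d_{g}(\gamma(s),\gamma(t))=|s-t|$, so if $\gamma(s),\gamma(t)\in B_{g}(x,r)$ the triangle inequality gives $|s-t|\le 2r$, whence
\begin{equation*}
\mu_{\gamma}\bigl(B_{g}(x,r)\bigr)\le 2r\qquad\text{for every }x\in K,\ r>0.
\end{equation*}

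\smallskip
\noindent\emph{Step 3 (iterating the base case).} Apply Lemma \ref{BaseCase} to $|f|^{p}$ with $\mu=\mu_{\gamma}$, together with the pointwise identity $|\nabla|f|^{p}|=p|f|^{p-1}|\nabla f|$ and H\"older, so that $\|\,|f|^{p}\,\|_{W^{1,1}_{g}}\le C_{p}\|f\|_{W^{1,p}_{g}}^{p}$. This yields
\begin{equation*}
\int_{\gamma}|f|^{p}\,dt\le C\,A_{1}(\mu_{\gamma})\,\|f\|_{W^{1,p}_{g}(K)}^{p},\qquad A_{k}(\mu):=\sup_{x\in K,\,r>0}r^{k-n}\mu\bigl(B_{g}(x,r)\bigr).
\end{equation*}
By Step 2, $A_{1}(\mu_{\gamma})\le 2\sup_{r}r^{2-n}$, which is finite when $n=2$ but diverges as $r\to 0$ when $n\ge 3$. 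For $n\ge 3$, I would promote the base estimate to the statement
\begin{equation*}
\int_{K}|u|^{p}\,d\mu\le C\,A_{k}(\mu)\,\|u\|_{W^{k,p}_{g}(K)}^{p},\qquad 1\le k\le n-1,
\end{equation*}
by induction on $k$. Each induction step trades one derivative of $u$ for a factor of $r$ in the Morrey condition: in a coordinate chart one writes $u$ as a Bessel/Riesz potential $u=G_{1}\ast h + r$, where $G_{1}(z)\sim|z|^{1-n}$ and $r$ is a smoother remainder, then applies the previous case to the measure obtained by integrating $\mu$ against $G_{1}(x-\cdot)$, which raises its Morrey exponent by one.

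\smallskip
\noindent\emph{Step 4 (conclusion).} Setting $k=n-1$ and invoking Step 2, one has $A_{n-1}(\mu_{\gamma})\le 2$, so the iterated inequality specializes to the desired bound with $C=C(g,K)$.

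\smallskip
\noindent\emph{Main obstacle.} The crux is the induction of Step 3 for $n\ge 3$. Lemma \ref{BaseCase} gains only one derivative per application, so reaching $W^{n-1,p}$ requires an iteration mechanism, either an honest Bessel potential representation adapted to $(K,g)$, or a family of auxiliary measures to which Lemma \ref{BaseCase} can be reapplied, while keeping the constants dependent only on $(K,g)$. The $\mathbb{R}^{n}$ analogues are classical (cf.\ Maz'ya--Shaposhnikova, as cited in the paper); the work here is to port them to a compact manifold via the finite cover of Step 1.
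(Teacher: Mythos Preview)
Your proposal is correct in outline and shares with the paper the two essential ingredients: the growth bound $\mu_{\gamma}(B(x,r))\le 2r$ for the arc-length measure of a minimizing geodesic (your Step~2), and the induction on the order of derivatives that trades each derivative for one power of $r$ in the Morrey exponent via a potential representation (your Step~3). The paper carries out exactly this induction, following Maz'ya--Shaposhnikova.

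The difference lies in how the potential representation is realized. You localize first: partition of unity on $f$, split $\gamma$ into boundedly many sub-arcs, and then in each Euclidean chart invoke the Riesz kernel $G_{1}(z)\sim|z|^{1-n}$ and the classical $\mathbb{R}^{n}$ induction. The paper instead works globally: it first disposes of the boundary by isometrically embedding $(K,g)$ into a closed manifold $(M,h)$ via a collar-and-doubling construction, extends $f$ with controlled $W^{n-1,p}$ norm, and then uses the Green's function $G$ of the Laplacian on $(M,h)$, which satisfies the same asymptotics $|\nabla_{y}G(x,y)|\le Cr^{1-n}$, to write $\phi(x)=\mathrm{Vol}(M)^{-1}\int_{M}\phi+\int_{M}\langle\nabla G,\nabla\phi\rangle$ and run the induction directly on $M$. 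This buys the paper a cleaner treatment of the boundary and avoids any chart-by-chart bookkeeping; your approach buys the ability to cite the Euclidean Maz'ya--Shaposhnikova results verbatim once localized.

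One point you should address explicitly: your Step~1 as written does not say what happens when $K$ has boundary. In boundary charts the Riesz-potential representation requires an extension across $\partial K$; either do this locally by reflection, or adopt the paper's doubling trick up front, which makes your chart reduction unnecessary anyway.
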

\begin{proof} In the case where $K$ is compact with boundary we may assume that $(K,g)$ is isometrically embedded in a smooth compact Riemannian manifold without boundary, denoted $(M,h)$, by a collar neighborhood and doubling argument (See p. 208-209 of Bray's RPI paper \cite{Bray-Penrose}). In this case
\begin{align}
M = K \sqcup \partial K \times [-\delta, \delta] \sqcup K
\end{align}
where $\delta >0$. The metric $h$ can be chosen so that $h = g$ on both disjoint pieces of $K \subset M$ and $h(x,-t) = h(x,t)$ on $\partial K \times [-\delta,\delta]$. Now we notice that given a function $f \in W^{n-1,p}_{g}(K)$ we can extend it to $M$ by a standard extension argument in so that
\begin{align}
\abs{\abs{f}}_{W^{n-1,p}_{h}(M)} \le C\abs{\abs{f}}_{W^{n-1,p}_{g}(K)}
\end{align}
and hence it will be sufficient to make most of the remaining argument on $M$. 

 In the case where $K$ was already compact without boundary we will now denote it by $(M,h)$ for consistent notation. Now by Lemma \ref{BaseCase} we see that generally we have for smooth $\phi$
\begin{equation}
	\int_M\abs{\phi}d\mu\le C\sup_{\lbrace r>0,x\in M\rbrace}r^{1-n}\mu(B(x,r))\abs{\abs{\phi}}_{W^{1,1}(M)}\label{MainTraceEq}
\end{equation}
for an arbitrary Borel measure $\mu$ on $M$.

We now generalize the inductive step in the proof of Theorem 1.1.1 in \cite{MS-SobolevMultipliers}. For the manifold $(M,h)$ we know there is a Green's function for the Laplacian (see \cite{H-NonlinearAnalysis}), which we will denote by $G(x,y)$. Distributionally, $G$ satisfies
\begin{equation}
	\Delta_{y}G(x,y)=Vol(M)^{-1}-\delta_{x}(y).
\end{equation}
In addition we know that 
\begin{align}
\abs{G(x,y)}\le Cr^{2-n},\label{GEstimate1}
\end{align}
where $r = d_h(x,y)$, the Riemannian distance from $x$ to $y$ with respect to $h$.
 Thus, after an integration by parts, we have
\begin{align}
	\phi(x)&=Vol(M)^{-1}\int_{M}\phi(y)d\mu_{h}(y)
	\\&+\lim_{\epsilon\rightarrow 0}\left(\int_{M-B(x,\epsilon)}\langle\nabla\phi,\nabla G\rangle d\mu_h(y)+\int_{\partial B(x,\epsilon)}G\frac{\partial\phi}{\partial\nu}dS_{\mu_{h}}(y)\right)
\\&=	Vol(M)^{-1}\int_{M}\phi(y)d\mu_{h}(y)+\int_{M}\langle\nabla\phi,\nabla G\rangle d\mu_{h}(y),\label{GEstimate2}
\end{align}
where $dS_{\mu_h}$ is the measure on $\partial B(x,\epsilon)$ induced by $\mu_h$.

Using estimates for $G$, \eqref{GEstimate1} and \eqref{GEstimate2}, we may now follow the proof of Theorem 1.1.1 and 1.1.2  in \cite{MS-SobolevMultipliers} nearly word for word to see that
\begin{equation}\label{Fundamental Trace Inequality}
	\int\abs{\phi}d\mu\le C\sup_{\lbrace r>0, x\in M\rbrace}\left \lbrace r^{m-n}\mu(B(x,r))\right \rbrace \abs{\abs{\phi}}_{W^{m,1}_{h}(M)},
\end{equation}
for smooth functions $\phi$. In fact we will give the proof in the more general setting where $p \ge 1$ below but first we make an observation about \eqref{Fundamental Trace Inequality}.

For a curve $\gamma$, we let $\mu_\gamma$ be $\mathcal{H}^{1}_{\gamma}$, the one dimensional Hausdorff measure restricted to $\gamma$. Let $\gamma$ be a length minimizing geodesic with respect to $h$ and remember the inclusion $i: \partial K \xhookrightarrow{} M$. By the fact that $\partial K, M$ are compact and $\partial K$ is smoothly embedded into $M$ we have that the Lipschitz constant of the map $i$ is bounded. This implies that
\begin{equation}
	\sup_{\lbrace r>0, x\in M\rbrace}\lbrace r^{-1}\mu_{\gamma}(B(x,r))\rbrace\le C,\label{MeasureBallInequality}
\end{equation}
since $B(x,r)$ is the collection of length minimizing geodesics with respect to $h$ emanating from $x$ with arc length less than $r$ and the distances with respect to $h$ and $g$ are comparable by the bound on the Lipschitz constant of $i$. Thus, we have for all smooth functions $\phi$
\begin{equation}
\int_{\gamma}\abs{\phi}dt\le C\abs{\abs{\phi}}_{W^{n-1,1}_{h}(M)}.
\end{equation}
Thus, by density, the above inequality holds for all functions in $W^{n-1,1}_{h}(M)$. 

We now prove \eqref{Fundamental Trace Inequality} in the case where $p \ge 1$. First from the chain rule, Young's inequality, and \eqref{MainTraceEq}, we have
\begin{align}
	\int_{M}\abs{\phi}^pd\mu=\int_{M}\abs{\phi^p}d\mu&\le C\sup r^{1-n}\mu(B(x,r))\int_M\abs{\nabla\phi^p} + \abs{\phi^p}d\mu_g
	\\&\le C\sup r^{1-n}\mu(B(x,r))\int_{M}\abs{\phi}^p+\abs{\nabla\phi}^p,
\end{align}
which is the base case of the desired inequality.
We now proceed by induction on the number of derivatives of $\phi$, with only slight modification to the proof of Theorem 1.1.1 of \cite{MS-SobolevMultipliers}. Note by \eqref{GEstimate2} we find
\begin{equation}\label{Beginning of Inductive Step}
\int_M\abs{\phi}^p d \mu \le\int_M\abs{Vol(M)^{-1}\int_M\phi d\mu_{h}+\int_M\langle\nabla G,\nabla\phi\rangle d\mu_{h}}^{p}d\mu.
\end{equation}\label{Middle of Inductive Step}
Using Jensen's inequality on the first term and H\"{o}lder's inequality on the second term, we see that the integrand of \eqref{Beginning of Inductive Step} is bounded by
\begin{align}
&\abs{Vol(M)^{-1}\int_M\phi d\mu_{h}+\int_M\langle\nabla G,\nabla\phi\rangle d\mu_{h}}^{p}
\\&\le C(p) \left[Vol(M)^{-1}\int_M\abs{\phi}^pd\mu_h+ \lp\int_M|\nabla G|^{\frac{p-1}{p}} |\nabla G|^{1/p}|\nabla\phi| d\mu_{h}\rp^{p}\right]
\\&\le C(p)Vol(M)^{-1}\int_M\abs{\phi}^pd\mu_h
\\&+ C(p)\lp \lp\int_M|\nabla G| d \mu_h \rp^{\frac{p-1}{p}} \lp \int_M|\nabla G||\nabla\phi|^p d\mu_{h} \rp ^{1/p}\rp^{p}
\\ &\le	C(p) \left[Vol(M)^{-1}\int_M\abs{\phi}^pd\mu_h+\abs{\abs{\nabla G}}^{p-1}_{L^{1}(M)}\int_M|\nabla G||\nabla\phi|^pd\mu_{h}\right].
\end{align}
Using Tonelli's theorem, we may estimate 
\begin{align}
\int_M\abs{\phi}^p d \mu &\le	C(p)\frac{\mu(M)}{Vol(M)}\abs{\abs{\phi}}^{p}_{L^{p}(M)}\label{FirstTermTonneli}
\\&+C(p)\abs{\abs{\nabla G}}^{p-1}_{L^{1}(M)}\int_M\abs{\nabla\phi}^p\left(\int_M\abs{\nabla G}d\mu\right)d\mu_h.\label{SecondTermTonneli}
\end{align}
We can begin by estimating \eqref{FirstTermTonneli} by noticing for $R = 2$diam$(M)$ that,
\begin{align}
\mu(M) = \frac{\mu(B(x,R))}{R^{n-m}} R^{n-m} \le \left[\sup_{\{r > 0, x \in M\}} \frac{\mu(B(x,r))}{r^{n-m}} \right] \max \{1,R^{n-m}\},
\end{align}
and so if we set $m=n-1$, $\mu=\mu_{\gamma}$, and combine with \eqref{MeasureBallInequality} we find
\begin{align}
C(p)\frac{\mu(M)}{Vol(M)}\abs{\abs{\phi}}^{p}_{L^{p}(M)} \le C \abs{\abs{\nabla\phi}}^{p}_{W^{n-1,p}_{h}(M)},
\end{align}
which gives the desired estimate of the right hand side of \eqref{FirstTermTonneli}.

Now we can estimate \eqref{SecondTermTonneli} by applying the inductive hypothesis with the measure $\int_M\abs{\nabla G}d\mu d \mu_h$ acting as $\mu$ to find
\begin{align}
&C(p)\abs{\abs{\nabla G}}^{p-1}_{L^{1}(M)}\int_M\abs{\nabla\phi}^p\left(\int_M\abs{\nabla G}d\mu\right)d\mu_h	\le C\abs{\abs{\nabla G}}^{p-1}_{L^{1}(M)}
\\&\cdot \left(\sup_{\lbrace r>0,x\in M\rbrace}r^{m-n-1}\int_{B_r}\int_{M}\abs{\nabla G}d\mu d\mu_{h}\right)\abs{\abs{\nabla\phi}}^{p}_{W^{n-1,p}_{h}(M)}.	
\end{align}
Now we can estimate $\abs{\nabla G}$ using 
\begin{align}
|\nabla_y G(x,y)| \le Cr^{1-n},\label{GEstimate3}
\end{align}
 in the exact same way as is found in the proof of Theorem 1.1.1 in \cite{MS-SobolevMultipliers}, by splitting the integral,
 \begin{align}
 \int_{B_r}\int_{M}\abs{\nabla G}d\mu d\mu_{h} & = \int_{B_r}\int_{B_{2r}}\abs{\nabla G}d\mu d\mu_{h} + \int_{B_r}\int_{M\setminus B_{2r}}\abs{\nabla G}d\mu d\mu_{h},\label{LastEstimateEq}
 \end{align}
 and noticing that by Tonelli's theorem and \eqref{GEstimate3} we can estimate the first term in \eqref{LastEstimateEq},
 \begin{align}
 \int_{B_r(x)}\int_{B_{2r}(x)}&\abs{\nabla G}d\mu d\mu_{h} \le  C \int_{B_{2r}(x)}\int_{B_{r}(x)}d_h(y,z)^{1-n}d\mu_{h}(z)d\mu(y)  
 \\&\le  C \int_{B_{2r}(x)}\int_0^r\int_{\partial B_{s}(x)}d_h(x,z)^{1-n}dS_{\mu_{h}} ds d\mu(y)
 \\&\le C r \mu (B(x,2r)),
 \end{align} 
 where $dS_{\mu_{h}}$ is the surface measure restricted from $\mu_h$. We can estimate the second term of \eqref{LastEstimateEq},
 \begin{align}
\int_{B(x,r)}\int_{M\setminus B(x,2r)}&\abs{\nabla G}d\mu d\mu_{h} \le   \int_{B(x,r)} \int_{M\setminus B(x,2r)}d_h(y,z)^{1-n}d\mu d \mu_h
\\&\le  C r^n \int_{M\setminus B(x,2r)}d_h(x,z)^{1-n}d\mu
\\&\le  C  r^n\int_{2r}^{\infty} \mu\{2r \le d_h(x,z) \le t\}t^{-n}dt
\\&\le  C  r^n\int_{2r}^{\infty}t^{n-m} t^{m-n} \mu(B(x,t))t^{-n}dt
\\&\le  C  r^n\int_{2r}^{\infty} t^{n-m} \sup_{\{r >0, x \in M\}}\{r^{m-n} \mu(B(x,r))\}t^{-n}dt
\\&\le  C \sup_{\{r >0, x \in M\}}\{r^{m-n} \mu(B(x,r))\} r^n\int_{2r}^{\infty} t^{-m} dt
\\&\le  C  r^{n-m+1} \sup_{\lbrace r >0, x \in M\rbrace}\{r^{m-n} \mu(B(x,r))\}.
 \end{align}
Putting everything together this implies
\begin{align}
\sup_{\lbrace r>0,x\in M\rbrace}r^{m-n-1}\int_{B_r}\int_{M}\abs{\nabla G}d\mu d\mu_{h}\le \sup_{\lbrace r>0,x\in M\rbrace}r^{m-n} \mu(B(x,r))
\end{align}
 and hence when we choose $m=n-1$ and $\mu = \mu_{\gamma}$ we find by \eqref{MeasureBallInequality} that
 \begin{align}
 \int_{\gamma}\abs{\phi}^pd\mu \le C \abs{\abs{\nabla\phi}}^{p}_{W^{n-1,p}_{h}(M)}.
 \end{align}
\end{proof}

\section{Proof of Main Theorem} \label{sec: MainThmProof}

In this section we complete the proof of Theorem \ref{SobolevBoundsAndHolderCompactness} by showing that when the hypotheses of Theorem \ref{SobolevBoundsAndHolderCompactness} are combined with Theorem \ref{SobolevTraceTrick} we can deduce the H\"{o}lder bounds necessary to apply Theorem \ref{HLS-thm}.
\begin{thm}\label{HolderCompactness}
Let $M_j=(M,g_j)$ be a sequence of compact Riemmanian manifolds and $M_0=(M,g_0)$ a background Riemannian manifold. If 
\begin{align}
\int_{\gamma_{q_1q_2}} |g_j|_{g_0}^p dt \le C,\label{MetricTensorUpperBound}
\end{align}
$p > 1$, where $\gamma_{q_1q_2}$ is a length minimizing geodesic connecting $q_1,q_2 \in M$ with respect to $g_0$, $C > 0$ independent of $j,q_1,q_2$. In addition, assume
\begin{align}
g_j(v,v) \ge c g_0(v,v), \: \forall q \in M, v \in T_qM, c > 0, \label{MetricTensorLowerBound}
\end{align}
then there exists a subsequence which converges in the uniform and GH sense to a length space $M_{\infty}=(M,d_{\infty})$ so that
\begin{align}
\sqrt{c} d_0(q_1,q_2) \le d_{\infty}(q_1,q_2) \le C^{1/p} d_0(q_1,q_2)^{\frac{p-1}{p}}.
\end{align}
If in addition, there exists a uniform volume bound,
\begin{align}
Vol(M_j) \le C,
\end{align}
then the sequence will converge in the SWIF sense to the same space as the uniform and GH limit with a possibly different current structure than the sequence (up to possibly taking a metric completion of the SWIF limit to get the GH and uniform limit).
\end{thm}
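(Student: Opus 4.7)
The plan is to deduce Theorem~\ref{HolderCompactness} by checking the bi-H\"older hypothesis of Theorem~\ref{HLS-thm} and then invoking it. The lower Lipschitz comparison is immediate from hypothesis \eqref{MetricTensorLowerBound}: the pointwise inequality $g_j(v,v) \ge c\, g_0(v,v)$ gives $L_{g_j}(\sigma) \ge \sqrt{c}\, L_{g_0}(\sigma)$ for every Lipschitz curve $\sigma$, and taking infima over curves joining $q_1$ to $q_2$ yields $d_j(q_1, q_2) \ge \sqrt{c}\, d_0(q_1, q_2)$.

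For the upper H\"older comparison, fix $q_1, q_2 \in M$ and let $\gamma \colon [0, L] \to M$ be a length-minimizing $g_0$-geodesic joining them, parameterized by $g_0$-arclength, so that $L = d_0(q_1, q_2)$ and $g_0(\gamma', \gamma') \equiv 1$. Simultaneous diagonalization of $g_j$ against $g_0$ yields the pointwise comparison of quadratic forms $g_j(v,v) \le |g_j|_{g_0}\, g_0(v,v)$, so along the $g_0$-unit-speed $\gamma$,
\begin{equation}
d_j(q_1,q_2) \;\le\; L_{g_j}(\gamma) \;=\; \int_0^L \sqrt{g_j(\gamma', \gamma')}\, dt \;\le\; \int_0^L |g_j|_{g_0}^{1/2}\, dt.
\end{equation}
H\"older's inequality against the integral hypothesis \eqref{MetricTensorUpperBound} then produces a uniform H\"older bound $d_j(q_1, q_2) \le K\, d_0(q_1, q_2)^{\alpha}$ with H\"older exponent $\alpha \in (0,1)$ and constant $K$ depending only on $p$ and $C$, matching the form required by \eqref{d_j1}.

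With the bi-H\"older comparison in place, Theorem~\ref{HLS-thm} applies directly: it supplies a subsequence converging in the uniform and Gromov-Hausdorff senses to a length space $(M, d_\infty)$, and the bi-H\"older inequality for $d_\infty$ is obtained by passing to the pointwise limit of the corresponding bounds for $d_j$. Under the additional uniform upper bound on volume, the same appendix theorem upgrades the convergence to the SWIF sense, with the limiting integral current space having metric completion equal to the Gromov-Hausdorff limit (and possibly a different integral current structure than the sequence). The only substantive step is therefore the H\"older estimate via the pointwise operator-norm comparison combined with H\"older's inequality applied to hypothesis \eqref{MetricTensorUpperBound}; everything else is a direct invocation of Theorem~\ref{HLS-thm}.
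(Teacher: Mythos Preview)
Your argument is correct and follows essentially the same route as the paper: bound $d_j$ from above by the $g_j$-length of a $g_0$-geodesic, control that length pointwise by $|g_j|_{g_0}$, apply H\"older against hypothesis~\eqref{MetricTensorUpperBound}, obtain the lower Lipschitz bound from~\eqref{MetricTensorLowerBound}, and invoke Theorem~\ref{HLS-thm}. The only difference is that you use the sharper pointwise estimate $\sqrt{g_j(\gamma',\gamma')}\le |g_j|_{g_0}^{1/2}$ whereas the paper writes $\sqrt{g_j(\gamma',\gamma')}\le |g_j|_{g_0}$; your version yields the H\"older exponent $\tfrac{2p-1}{2p}$ and constant $C^{1/(2p)}$ rather than the $\tfrac{p-1}{p}$ and $C^{1/p}$ recorded in the statement, which is harmless for the compactness conclusion (any $\alpha\in(0,1)$ suffices for Theorem~\ref{HLS-thm}) but does not reproduce the exact constants as stated.
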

\begin{proof}
Notice that if $\gamma_{q_1q_2}$ is unit speed with respect to $g_0$ then \eqref{MetricTensorUpperBound} implies
\begin{align}
d_{g_j}(q_1,q_2) &\le \int_{\gamma_{q_1q_2}} \sqrt{g_j(\gamma_{q_1q_2}',\gamma_{pq}')} dt 
\\&\le \int_{\gamma_{q_1q_2}} |g_j|_{g_0} dt
\\&\le \left(\int_{\gamma_{q_1q_2}} dt \right)^{\frac{p-1}{p}} \left(\int_{\gamma_{q_1q_2}} |g_j|_{g_0}^pdt \right)^{1/p} \le C^{1/p} d_0(q_1,q_2)^{\frac{p-1}{p}}.
\end{align}
Similarly, if $\alpha_{q_1q_2}$ is the length minimizing geodesic between $q_1,q_2 \in M$ with respect to $g_j$ then \eqref{MetricTensorLowerBound} implies
\begin{align}
d_{g_j}(q_1,q_2)&= \int_{\alpha_{q_1q_2}} \sqrt{g_j(\alpha_{q_1q_2}',\alpha_{q_1q_2}')} dt 
\\& \ge \sqrt{c}\int_{\alpha_{q_1q_2}} \sqrt{g_0(\alpha_{q_1q_2}',\alpha_{q_1q_2}')} dt \ge \sqrt{c} d_0(q_1,q_2).
\end{align}

Hence we have the uniform H\"{o}lder bounds on the distance functions required to apply Theorem \ref{HLS-thm} to finish up the proof.

\end{proof} 

\textbf{Proof of Theorem \ref{SobolevBoundsAndHolderCompactness}:}
\begin{proof}
By applying Theorem \ref{SobolevTraceTrick} to the function $|g_j|_{g_0}$ we satisfy the metric upper bound assumption of Theorem \ref{HolderCompactness}. Notice by the Sobolev inequality we can find a uniform $L^n$ bound on $|g_j|_{g_0}$ which implies a uniform volume bound on $M_j$. Since we have assumed in Theorem \ref{SobolevBoundsAndHolderCompactness} the other assumption of Theorem \ref{HolderCompactness} directly we find that the desired result follows.
\end{proof}

\section{Appendix by Brian Allen and Christina Sormani} \label{sec: App}

In this section we prove the following theorem which is a restatement of Theorem \ref{HLS-thm} from section \ref{sec: Intro}:

\begin{thm}\label{HLS-thm-2}
Let $M$ be a fixed compact, $C^0$ manifold with piecewise continuous metric tensor $g_0$ and $g_j$ a sequence of piecewise continuous Riemannian metrics on $M$, defining distances $d_j$ such that
\be\label{d_j}
\frac{1}{\lambda}d_0(p,q) \le  d_j(p,q) \le \lambda d_0(p,q)^{\alpha},
\ee
for some $\lambda \ge 1$ where $\alpha \in (0,1]$ then there exists a subsequence of $(M, g_j)$
that converges in the uniform and Gromov-Hausdorff sense. In addition, if there is a uniform upper bound on volume
\be \label{HLS-thm-vol}
\vol(M, g_j) \le V_0.
\ee 
the subsequence converges in the intrinsic flat sense to the same space with a possibly different current structure than the sequence (up to possibly
taking a metric completion of the intrinsic flat limit to get
the Gromov-Hausdorff and uniform limit). 
\end{thm}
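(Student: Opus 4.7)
The plan is to first establish uniform and Gromov--Hausdorff convergence via Arzel\`a--Ascoli, and then address the intrinsic flat convergence through a filling-current construction under the additional volume bound.

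\emph{Uniform and GH convergence.} I would view the sequence $\{d_j\}$ as continuous functions on $M \times M$ equipped with the product $d_0$-metric. The H\"older upper bound combined with the triangle inequality gives
\[
|d_j(p,q) - d_j(p',q')| \le d_j(p,p') + d_j(q,q') \le \lambda\bigl(d_0(p,p')^\alpha + d_0(q,q')^\alpha\bigr),
\]
so the family is uniformly equicontinuous, and it is uniformly bounded by $\lambda\operatorname{diam}_{d_0}(M)^\alpha < \infty$. Arzel\`a--Ascoli then yields a subsequence $d_{j_k} \to d_\infty$ uniformly. Symmetry, the triangle inequality, and positivity (from $d_\infty \ge \lambda^{-1} d_0$) pass to the limit, so $(M, d_\infty)$ is a compact metric space. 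GH convergence then follows because the identity $\mathrm{id}: (M, d_{j_k}) \to (M, d_\infty)$ is an $\epsilon_k$-isometry with $\epsilon_k = d_{\mathrm{unif}}(d_{j_k}, d_\infty) \to 0$.

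\emph{SWIF convergence.} Assuming the volume bound \eqref{HLS-thm-vol}, I would construct a common ambient metric space $Z_k$ that isometrically contains both $(M, d_{j_k})$ and $(M, d_\infty)$ with Hausdorff distance $O(\epsilon_k)$ between their images; a convenient choice is the doubled space $Z_k = M \sqcup M$ with an interpolating metric that restricts to $d_{j_k}$ and $d_\infty$ on the two copies respectively. The canonical integral current $T_{j_k}$ on $(M, d_{j_k})$ has $\mass(T_{j_k}) = \vol(M, g_{j_k}) \le V_0$ and vanishing boundary. I would then build an explicit filling current on the cylinder $M \times [0,1]$ whose metric interpolates affinely between $d_{j_k}$ and $d_\infty$ in the time direction, producing a filling of mass $O(V_0\,\epsilon_k) \to 0$; this controls $d_F^{Z_k}(\varphi_{k\#} T_{j_k}, \psi_{k\#} T_\infty)$. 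A Wenger-type compactness argument (uniform mass bound and vanishing boundary) then delivers a flat-convergent subsequence with limit integral current $T_\infty$ supported naturally on $(M, d_\infty)$.

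\emph{Main obstacle.} The most delicate step is making $T_\infty$ rigorous as an Ambrosio--Kirchheim integral current on $(M, d_\infty)$ and reconciling its settled completion with the GH/uniform limit. Because only a H\"older (not Lipschitz) upper bound on $d_\infty$ is available, the limit may admit points of vanishing lower density---as in the cusp Example \ref{CuspEx}---so $\operatorname{set}(T_\infty) = M'$ can be a proper subset of $M$, and the current structure on $M'$ may genuinely differ from the pushforward of the sequence's structure, which accounts for the "possibly different current structure" clause in the statement. I would verify rectifiability, boundary rectifiability, and the sharp mass bound on the limit, and then use the lower bound $d_\infty \ge \lambda^{-1} d_0$ together with the H\"older continuity of the identity $(M, d_0) \to (M, d_\infty)$ to argue that every point of $M \setminus M'$ is a $d_\infty$-limit of points in $M'$, so that the metric completion of $(M', d_\infty|_{M'})$ recovers $(M, d_\infty)$. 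These support-closure and settled-completion technicalities closely parallel, but must strictly generalize, the bi-Lipschitz arguments in the appendix of \cite{HLS}.
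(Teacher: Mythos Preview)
Your uniform and GH argument is correct and essentially identical to the paper's.

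The SWIF portion has a genuine gap. You propose to build an explicit filling on a cylinder $M\times[0,1]$ between $\varphi_{k\#}T_{j_k}$ and $\psi_{k\#}T_\infty$ with mass $O(V_0\epsilon_k)$, but $T_\infty$ has not yet been defined at that point---you only produce it a sentence later via compactness, so the logic is circular. More substantively, the mass bound $O(V_0\epsilon_k)$ is precisely what the H\"older (rather than bi-Lipschitz) hypothesis breaks: in \cite{HLS} one could take $T_\infty=T$, the fixed current defined by the charts of $M$, and the product current on the cylinder had mass comparable to $\epsilon_k\mass(T)$. Here the identity $(M,d_0)\to(M,d_\infty)$ is only H\"older, so the $d_0$--bi-Lipschitz charts defining $T$ need not be bi-Lipschitz for $d_\infty$, and $T$ need not be an integral current on $(M,d_\infty)$ at all; no a~priori filling-mass estimate is available.

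The paper avoids any explicit filling. It embeds each $(M,d_j)$ and $(M,d_\infty)$ isometrically into $Z_j=[-\epsilon_j,\epsilon_j]\times M$ via the HLS construction, glues all the $Z_j$ along the common copy $\varphi'(M)$ into one complete space $Z$, and then applies Ambrosio--Kirchheim compactness directly to the pushforwards $S_j=\varphi_{j\#}T$ inside $Z$ (using only $\|T\|_{d_j}(M)\le V_0$ and $\partial T=0$) to extract a weak limit $S$; this yields the SWIF limit $(X_\infty,d_\infty,T_\infty)$ with $X_\infty=\set(S)$, without ever comparing $T_\infty$ to $T$. The closure statement $\operatorname{Cl}_{d_\infty}(X_\infty)=M$ is then obtained by a concrete contradiction exploiting the \emph{lower} bound $d_j\ge\lambda^{-1}d_0$: if $z_0=\varphi'(x_0)\notin\spt(S)$ then one shows $B_{d_j}(x_j,r/2)\supset B_{d_0}(x_\infty,\lambda r/4)$ along a subsequence, hence $\|S_j\|(B(z_0,r))\ge\lambda^{-m}\|T\|_{d_0}(B_{d_0}(x_\infty,\lambda r/4))>0$ uniformly in $j$, contradicting $\|S_j\|(B(z_0,r))\to 0$. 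Your closure sketch does not supply this mechanism and would need to be replaced by an argument of this kind.
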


The uniform and Gromov-Hausdorff convergence  
under weaker hypothesis was proven by Gromov \cite{Gromov-metric}.  A slightly stronger conclusion was proven in theorem by Huang-Lee-Sormani in the appendix of \cite{HLS}assuming uniform bi-Lipschitz bounds on the $d_j$ with a constructive proof that enabled one to estimate all the uniform, Gromov-Hausdorff and Intrinsic Flat distances precisely.  We will use that construction as a starting point.
To obtain the intrinsic flat convergence with our weaker hypotheses,
we need to make a
careful analysis of the properties of the manifolds and their
limit viewed as integral current spaces.

   We begin with a subsection which 
reviews the notion of an integral current space as defined by Sormani-Wenger in \cite{SW-JDG} and Ambrosio-Kirchheim's
integral currents as in \cite{AK}.  At the end of this review we state
a theorem concerning integral current spaces, Theorem~\ref{HLS-2}, which we can then immediately see implies Theorem~\ref{HLS-thm}.
We conclude with the proof of that theorem using serious analysis from \cite{HLS} and \cite{AK}.

\subsection{Converting our Manifolds to Integral Current Spaces}\label{subsec: Currents Background}

Given a fixed possibly singular compact, connected (we will always assume connected) manifold, $(M,g_0)$ 
and a sequence of possibly singular Riemannian metrics $g_j$
as in Theorem~\ref{HLS-thm}  then one can define a sequence of distances
\be
d_j(p,q)=\inf \{L_j(C):\,\, C(0)=p, \, C(1)=q\}
\ee
where
\be
L_j(C)=\int_0^1 g_j(C'(s),C'(s))^{1/2}\, ds.
\ee
Observe that by the definition of $d_j$ and the definition of
the smooth structure on $M$, that any diffeomorphic
chart 
\be
\varphi: U\subset {\mathbb R}^m \to \varphi(U) \subset M
\ee
such that $\varphi(U)$ avoids a singular point in $(M,g_j)$ can be
restricted to a compact set $K\subset U$ so that
\be
\varphi: K \subset {\mathbb R}^m \to \varphi(K) \subset M
\ee
is bi-Lipschitz with respect to $d_j$ and $d_0$, although there
is no uniform bound on this bi-Lipschitz constant.   We know that
balls measured with respect to $d_j$ are open with respect to the 
manifold topology.  A set is of $m$-dimensional Hausdorff measure zero with respect
to $d_j$ iff it has $m$-dimensional Hausdorff measure zero with respect to $d_0$ (Lebesgue measure on $M$):
\be\label{Hausdorff-zero}
\mathcal{H}^m_{d_j}(A)=0 \iff \mathcal{H}^m_{d_0}(A)=0.
\ee

If a countable collection of disjoint charts on 
$M^m$ that cover $\mathcal{H}_j$-almost the whole space 
are bi-Lipschitz with respect to $d_j$, then we can define an integral current, $T$, on $M$ in the sense of Ambrosio-Kirchheim \cite{AK}
using those charts, so that
\be\label{defT}
T(f, \pi_1,...,\pi_m)=\int_M f\, d\pi_1\wedge\cdots\wedge d\pi_m.
\ee
Note that $T$ does not depend on $j$ except insofar as verifying
that there is a collection of smooth charts on $M$ are bi-Lipschitz with respect to $d_j$
with some bi-Lipschitz constants that need not even be uniform for
a fixed $j$.  Observe also that support
of $T$ satisfies:
\be
\spt(T)=\left\{p\in M: \, \forall r>0 \,\,T\rstr B(p,r) \neq 0\right\}=M.
\ee

Now suppose we have a compact metric space, $X$,
with a sequence of distances, $d_j$, and an integral current
$T$ defined on $X$, such that 
\be
\spt(T)=X.
\ee
  Ambrosio-Kirchheim define the mass measure of $T$ 
in Definition 2.6 of \cite{AK} to be
the smallest Borel measure $\mu$ such that
\be\label{AKmass}
T(f, \pi_1,...,\pi_m) \le \prod_{i=1}^m \Lip_{d_j}(\pi_i) \int_X f d\mu.
\ee
Since the Lipschitz constants $\Lip_{d_j}$ depend on $d_j$, the
mass measure also depends on $d_j$.  So we
will write $||T||_{d_j}$ to indicate the mass measure.  
The set of positive density of $T$ then also depends on $d_j$:
\be
\set_{d_j}(T)=\left\{p\in M:\, \liminf_{r\to 0} \frac{||T||_{d_j}(B(p,r))}{r^m} >0\right\}.\label{setDef}
\ee

In particular, when $X=M^m$ is a smooth manifold with singular points, and $T$ is defined using its charts as in (\ref{defT}),
the mass measure is the volume with respect to $g_j$
\be
\vol_{g_j}(A)=||T||_{d_j}(A),
\ee
where $A \subset M$ a Borel set (See \cite{Sormani-scalar}).
A singular point with a cusp singularity (with respect to $d_j$)
will not be included in $\set_{d_j}(T)$, but a cone singularity
(with respect to $d_j$) will be included in the set (See Example A.8 and A.9 of Sormani-Wenger \cite{SW-JDG} for a discussion of cone and cusp points).  The determination as to whether a singularity is cone or a cusp depends on the
$\vol_{g_j}$ near the singular point.

We can thus define a sequence of integral current spaces,
$(X_j, d_j, T)$ as in Sormani-Wenger \cite{SW-JDG} by
taking 
\be \label{Xjset}
X_j =\set_{d_j}(T) \subset X
\ee
and $d_j$ restricted to $X_j$ from $X$.  Note that in \cite{SW-JDG}
Sormani-Wenger already defined the notion of an integral current space associated to a singular Riemannian manifold using
(\ref{defT}).  What we have done here is observe that when $M$ is
constant and only $g_j$ changes, we obtain an integral current
space in which $(X_j, d_j)$ depend upon $j$ but not $T$.
Note that in Huang-Lee-Sormani \cite{HLS} the space $X$
did not depend on $j$ either  because there 
were uniform bi-Lipschitz controls on the distances in that paper.

Note that Ambrosio-Kirchheim proved that the closure of the set of a current is the support of the current.  
\be
Cl_{d_j}(X_j)=Cl_{d_j}(\set_{d_j}(T))=\spt(T)=X.
\ee
So the metric completion of $(X_j,d_j)$ is $(X,d_j)$.

\subsection{Convergence of Integral Current Spaces}

Intrinsic flat (SWIF) convergence was defined in Sormani-Wenger \cite{SW-JDG} for sequences of integral current spaces, and the limits are also integral current spaces (and thus not necessarily compact).  Sormani-Wenger proved that if a sequence converges in the GH sense, and there is a uniform upper bound on the total mass,
\be
\mass(X_j,d_j,T_j)=||T_j||_{d_j}(X_j),
\ee
and $\partial T_j=0$  then a subsequence converges in the SWIF sense to an integral current space which is either the ${\bf 0}$ space or 
$(X_\infty, d_\infty, T_\infty)$ where $X_\infty$ is subset of the GH limit, $X$, and $d_\infty$ is the restricted metric of the GH limit.

In the Appendix of \cite{HLS}, Huang-Lee-Sormani proved that if one has a fixed compact metric space $(X, d_0)$ and a sequence of metrics, $d_j$ on $X$, such that
\be
\frac{1}{\lambda}d_0(p,q) \le  d_j(p,q) \le \lambda d_0(p,q),
\ee
then a subsequence converges in the uniform, GH, and SWIF sense to the same compact limit space, $(X, d_\infty)$. In particular, no cusp singularities form. Throughout the same integral current is used to define the SWIF distance for all metric spaces. 

Here we assume only uniform H\"older bounds on the distance functions and so cusps may form.  As in \cite{HLS} and \cite{Gromov-metric}, we obtain subsequences which converge in the uniform and GH sense to some $(X, d_\infty)$.  However now our SWIF limit, $X_\infty$, may be a proper subset of the GH limit, $X$.  Nevertheless we do prove $Cl_{d_\infty}(X_\infty)=X$ which is the best one can expect given that cusp singularities may form with only H\"older bounds on the distance functions.  

\begin{thm}\label{HLS-2}
Fix a compact metric space $(X, d_0)$ and fix
$\lambda>0$.   Suppose that
$d_j$ are metrics on $X$ and 
\be\label{d_j'}
\frac{1}{\lambda}d_0(p,q) \le  d_j(p,q) \le \lambda d_0(p,q)^{\alpha}
\ee
where $\alpha\in (0,1]$.
Then there exists a subsequence, also denoted $d_j$,
and a length metric $d_\infty$ satisfying \eqref{d_j'} with
$j=\infty$ such that
$d_j$ converges uniformly to $d_\infty$:
\be\label{epsj}
\epsilon_j= \sup\left\{|d_j(p,q)-d_\infty(p,q)|:\,\, p,q\in X\right\} \to 0.
\ee 
Furthermore
\be\label{GHjlim}
\lim_{j\to \infty} d_{GH}\left((X, d_j), (X, d_\infty)\right) =0
\ee
If in addition  $(X_j, d_j, T)$, $X_j \subset X$, are $m$ dimensional
integral current spaces without boundary such that 
\be
Cl_{d_j}(X_j)=X
\ee
with a uniform upper bound on total mass:
\be\label{total-mass}
||T||_{d_j}(X) \le V_0
\ee
then
\be\label{Fjlim}
\lim_{j\to \infty} d_{\mathcal{F}}\left((X_j, d_j,T), (X_\infty, d_\infty,T_\infty)\right) =0.
\ee
where $(X_\infty, d_\infty, T_\infty)$ has $\set_{d_\infty}(T_{\infty})=X_\infty\subset X$ and
the metric completion of $X_\infty$ with respect to $d_\infty$
is $X$: 
\be
Cl_{d_\infty}(X_\infty)=X.
\ee  
\end{thm}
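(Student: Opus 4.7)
The plan is to obtain uniform and Gromov--Hausdorff convergence from Arzel\`a--Ascoli, build the intrinsic flat limit through a cylinder-filling construction that only needs the uniform closeness $\epsilon_j$ of the $d_j$ to $d_\infty$, and finally prove the closure identity $Cl_{d_\infty}(X_\infty)=X$ by identifying the limit current with $T$ viewed as a multilinear functional. For the first step, the upper bound $d_j\le\lambda d_0^\alpha$ together with the triangle inequality provides the family $\{d_j:X\times X\to\mathbb{R}\}$ with the common modulus of continuity $\omega(r)=2\lambda r^{\alpha}$, while the lower bound together with compactness of $X$ gives uniform pointwise boundedness; Arzel\`a--Ascoli then extracts a uniformly convergent subsequence $d_j\to d_\infty$. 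The metric axioms pass to uniform limits, the bounds \eqref{d_j'} pass with $j=\infty$, and a standard argument (see \cite{BBI}) shows that uniform limits of length metrics on a compact space are length metrics. The uniform convergence is exactly \eqref{epsj}, and the identity map $\mathrm{id}:(X,d_j)\to(X,d_\infty)$ is an $\epsilon_j$-isometric surjection, giving \eqref{GHjlim}.

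For intrinsic flat convergence I would adapt the cylinder construction from the appendix of \cite{HLS}. For each $j$, equip $Z_j=X\times[0,\epsilon_j]$ with the length metric making $X\times\{0\}$ isometric to $(X,d_j)$, $X\times\{\epsilon_j\}$ isometric to $(X,d_\infty)$, and each vertical fiber of length $\epsilon_j$; the compatibility condition for this to define a valid metric reduces precisely to $|d_j-d_\infty|\le\epsilon_j$, so the H\"older hypothesis can take the place of the bi-Lipschitz one used in \cite{HLS}. The canonical inclusions $\varphi_j:(X,d_j)\hookrightarrow Z_j$ at height $0$ and $\varphi_\infty:(X,d_\infty)\hookrightarrow Z_j$ at height $\epsilon_j$ are metric isometric embeddings, and the Ambrosio--Kirchheim product construction yields a filling current $U_j$ in $Z_j$ with $\partial U_j=\varphi_{\infty\#}T-\varphi_{j\#}T$ and $\mass(U_j)\le C\epsilon_j V_0$. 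Therefore $d_{\mathcal{F}}((X_j,d_j,T),(X,d_\infty,T))\le C\epsilon_j V_0\to 0$, producing \eqref{Fjlim}, and the Ambrosio--Kirchheim theory identifies the intrinsic flat limit with the canonical integral current space $(\set_{d_\infty}(T),d_\infty,T)$. Thus $T_\infty=T$ as a multilinear functional on tuples $(f,\pi_1,\dots,\pi_m)$, while the associated mass measure and set of positive density now depend on the new metric $d_\infty$.

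The most delicate step is the closure identity $Cl_{d_\infty}(X_\infty)=X$. One inclusion is automatic since $X_\infty\subset X$. For the reverse, the Ambrosio--Kirchheim theorem tells us $Cl_{d_\infty}(\set_{d_\infty}(T))=\spt_{d_\infty}(T)$, so it suffices to show $\spt_{d_\infty}(T)=X$. Fix $p\in X$ and $r>0$; the H\"older bound gives $B_{d_0}(p,(r/\lambda)^{1/\alpha})\subset B_{d_\infty}(p,r)$, and this $d_0$-ball contains a chart of the $C^0$ manifold $M$ on which $T$ acts as integration of differential forms per \eqref{defT}. Constructing an explicit test tuple $(f,\pi_1,\dots,\pi_m)$ supported in this chart, with the $\pi_i$ being coordinate functions that are Lipschitz with respect to $d_\infty$ by virtue of the lower bound $d_\infty\ge d_0/\lambda$ in \eqref{d_j'}, and with $T(f,\pi_1,\dots,\pi_m)\ne 0$, then forces $\|T\|_{d_\infty}(B_{d_\infty}(p,r))>0$ via \eqref{AKmass}. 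The main obstacle throughout is this bookkeeping: relating the Ambrosio--Kirchheim mass measure $\|T\|_{d_\infty}$, defined intrinsically by Lipschitz test functions with respect to $d_\infty$, to the chart-based definition of $T$, when $d_\infty$ is merely a H\"older limit and carries no Riemannian structure. The lower bound in \eqref{d_j'} is essential here, as it keeps coordinate functions Lipschitz with respect to $d_\infty$ and so makes the intrinsic mass measure compatible with the extrinsic chart construction.
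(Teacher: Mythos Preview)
Your Arzel\`a--Ascoli step and the passage to GH convergence are fine and match the paper. The gap is in the intrinsic flat part: you assume that $T$ remains an Ambrosio--Kirchheim integral current on $(X,d_\infty)$ with finite mass, and that the cylinder filling $U_j$ has $\mass(U_j)\le C\epsilon_j V_0$. Neither is justified under the H\"older hypothesis. The upper bound $d_\infty\le\lambda d_0^{\alpha}$ means the charts defining $T$ in \eqref{defT} are only H\"older into $(X,d_\infty)$, so it is not clear $T$ is even rectifiable there; more concretely, a $d_\infty$-Lipschitz function $\pi$ is only $d_0$-H\"older, so the expression $\int_M f\,d\pi_1\wedge\cdots\wedge d\pi_m$ need not make sense and $T$ may fail to extend to the full class of $d_\infty$-Lipschitz test tuples with finite mass. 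Your filling estimate needs $\|T\|_{d_\infty}(X)<\infty$ (that is what $\mass_{Z_j}(\varphi_{\infty\#}T)$ equals), whereas the hypothesis \eqref{total-mass} bounds only $\|T\|_{d_j}(X)$. In the bi-Lipschitz setting of \cite{HLS} these mass measures are comparable; under merely H\"older control they are not, and the paper explicitly flags this, remarking that one does not know whether $T_\infty=T$.

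The paper's route is genuinely different: it glues all the cylinders $Z_j$ along the common copy $\varphi'(X)$ into a single complete space $Z$ and then invokes the Ambrosio--Kirchheim compactness theorem for the sequence $S_j=\varphi_{j\#}T$, which only needs the uniform bound $\mass(S_j)=\|T\|_{d_j}(X)\le V_0$. The weak limit $S$ defines $T_\infty$ abstractly, with no claim that it equals $T$. The closure identity $Cl_{d_\infty}(X_\infty)=X$ is then proved by contradiction using lower semicontinuity of mass under weak convergence together with the lower bound $d_j\ge d_0/\lambda$, which yields a uniform positive lower bound $\|S_j\|(B(z_0,r))\ge\lambda^{-m}\|T\|_{d_0}(B_{d_0}(x_\infty,\lambda r/4))>0$. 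Your support argument for $T$ on $(X,d_\infty)$ would be a cleaner path \emph{if} one knew $T$ were a current there, but that is precisely the missing piece.
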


\begin{rmrk}
It is clear from the previous subsection that this theorem implies Theorem~\ref{HLS-thm} since (\ref{HLS-thm-vol}) implies
(\ref{total-mass}).
\end{rmrk}

\begin{rmrk}
Note that our hypothesis on the distances in (\ref{d_j}) is not strong enough
to control the total mass uniformly (see Example~\ref{notpyramids} in the next subsection). 
By Ambrosio-Kirchheim \cite{AK} Theorem 9.5 and Lemma 9.2 for a rectifiable current,
\be
||T||_{d_j}(X) \le \theta_j  (2^m/\omega_m) \mathcal{H}^m_{d_j}(X)
\ee
where $\theta_j$ is the maximum multiplicity of $(X, d_j, T)$. 
In \cite{HLS} the conditions on $d_j$ were strong enough
to obtain uniform bounds on the Hausdorff measure and consequently on the total mass $||T||_{d_j}(X)$ but our 
hypothesis in (\ref{d_j'}) do not provide uniform bounds
on the Hausdorff measure (see Example~\ref{notpyramids}). 
\end{rmrk}

\begin{rmrk}
In \cite{HLS}, there were bi-Lipschitz relationships between
$d_j$ and $d_\infty$ that allowed one to see that 
$T_\infty= T$.   It would take some serious geometric measure 
theory to find an example satisfying the weaker H\"older conditions
of the $d_j$ demonstrating that we need not have $T_\infty= T$.
The challenge is that one would need to find an example
where the distances behaved badly on a set of positive measure.  
These are a challenge to construct.
\end{rmrk}

The proof of this theorem follows in the style of the appendix of \cite{HLS} with significant extra complications arising due to the weaker hypothesis.

\begin{proof}
Let us examine the distances, 
\be
d_j: X\times X \to [0, \lambda \diam_{d_0}(X)^{\alpha}]
\ee
as functions on the compact metric space
$X\times X$ endowed with the taxi product metric:
\be
d_{taxi}((p_1,p_2),(q_1,q_2)) = d_0(p_1,q_1)+d_0(p_2,q_2).
\ee
By the triangle inequality and (\ref{d_j'}) we have
equicontinuity of the $d_j$ with respect to $d_{taxi}$:
\begin{eqnarray}
|d_j(p_1,p_2)-d_j(q_1,q_2)|
&\le& d_j(p_1,q_1) + d_j(p_2, q_2)\\
&\le& \lambda d_0(p_1,q_1)^{\alpha}
+\lambda d_0(p_2,q_2)^{\alpha}\\
&\le& 2\lambda d_{taxi}((p_1,p_2),(q_1,q_2))^{\alpha}.
\end{eqnarray}
Thus by the Arzela-Ascoli theorem applied to the compact
metric space $(X\times X, d_{taxi})$, there is a subsequence
of the $d_j$, also denoted $d_j$, which converges uniformly to some 
\be
d_\infty: X\times X \to [0, \lambda \diam_{d_0}(X)]
\ee
which satisfies (\ref{d_j'}) with $j=\infty$.  In particular,
$d_\infty$ is a definite, symmetric function satisfying the
triangle inequality.  Thus we have
(\ref{epsj}) and by Gromov \cite{Gromov-metric} we obtain 
Gromov-Hausdorff convergence as in (\ref{GHjlim}) as well,
to a compact metric space $(X, d_\infty)$. 

 By Sormani-Wenger \cite{SW-JDG}, a further subsequence, 
also denoted with just $j$, converges in the SWIF sense to some 
integral current space $(X_\infty, d_\infty, T_\infty)$ where $X_\infty\subset X$ as long as there is a uniform upper bound on the total mass $||T||_{d_j}(X)$. At this point we do not yet
know how $T_\infty$ is related to $T$.  We do not even know if
$T$ is even an integral current structure for $(X_\infty, d_\infty)$
since the charts defining $T$ which were bi-Lipschitz with respect to
$d_0$ need not be bi-Lipschitz with respect to $d_\infty$.   We only
know (\ref{d_j'}) holds with $j=\infty$.  In order to better understand 
$(X_\infty, d_\infty, T_\infty)$ we will not just cite the work in
\cite{SW-JDG} but construct this limit space explicitly using 
the construction in \cite{HLS} combined with the proof 
of a theorem in \cite{SW-JDG}.

In \cite{HLS}, Huang-Lee-Sormani construct an ambient metric space 
\be
Z_j= [-\epsilon_j, \epsilon_j] \times X.
\ee
where
\be\label{epsj}
\epsilon_j= \sup\left\{|d_j(p,q)-d_\infty(p,q)|:\,\, p,q\in X\right\} 
\ee
with a metric $d'_j$ on $Z_j$ such that
\be \label{iso-}
d'_j((-\epsilon_j,p), (-\epsilon_j,q)) = d_j(p,q)
\ee
\be \label{iso+}
d'_j((\epsilon_j,p), (\epsilon_j,q)) = d_\infty(p,q).
\ee
Thus we have metric isometric embeddings
$\varphi_j:(X, d_j)\to (Z_j, d'_j)$ and
$\varphi_j':(X, d_\infty)\to (Z_j, d'_j)$ such that
\be
\varphi_j(p)=(-\epsilon_j, p)
\textrm{ and } \varphi'_j(p)=(\epsilon_j, p).
\ee

We can create an even larger complete metric space, $(Z, d')$ gluing together all these $(Z_j, d'_j)$ along the isometric images
 $\varphi'_j(X)$ following the construction in Section 4.1 of Sormani-Wenger \cite{SW-JDG}.
  There it was shown that
  distance preserving maps
 remain distance preserving maps,
 \be
 \varphi_j: (X, d_j) \to (Z,d') \textrm{ and }
 \varphi'=\varphi_j': (X, d_\infty) \to (Z, d')
 \ee
 where $\varphi_j'$ no longer depend on $j$ when
 viewed as maps into this space $Z$ where their
 images have been identified.  Note that in the specific 
 setting of the \cite{HLS} construction, for
 each $x\in X$ we have identified 
 all the points
 $(x, \epsilon_j)\subset Z_j$ for $j=1,2,...$
 to be a single point in $Z$.   See Figure~\ref{fig-book}.
 
 \begin{figure}[h]
\begin{center}
\includegraphics[width=\textwidth]{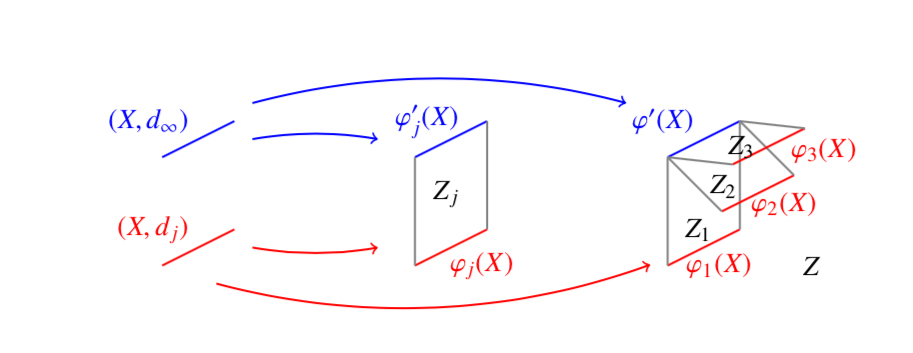} 
\caption{Here $Z$ is a book where 
\textcolor{blue}{$\varphi'(X)=\varphi_j'(X)$} is the spine 
and each page is
 a $Z_j$ running from  \textcolor{red}{$\varphi_j(X)$}
 to \textcolor{blue}{$\varphi_j'(X)$}.}   
\label{fig-book}
\end{center}
\end{figure}
 
  Lying within $(Z, d')$ is a compact metric space
$(Z',d')$ consisting of the union of the images of $\varphi_j'(X)$
and $\varphi_j(X)$.  Applying Ambrosio-Kirchheim compactness theorem \cite{AK}, a subsequence of the $S_j=\varphi'_{j\#}T$ converges weakly to some integral current $S$ in $Z' \subset Z$.  As in Section 4.1 of \cite{SW-JDG}
we can conclude
\be
(X, d_j, T) \Fto (X_\infty, d_\infty, T_\infty)
\ee
where $X_\infty\subset X$ has $\varphi_j'(X_\infty)=\set(S)$
and $\varphi'_{j\#}T_\infty=S$ and $\varphi_j': (X, d_\infty)\to (Z, d')$
is the distance preserving map constructed above. 

In the next few paragraphs we prove
$Cl_{d_\infty}(X_\infty)=X$.  This follows iff
\be
Cl_{d'}(\set(S))= \varphi_j'(X) \subset Z.
\ee
According to Ambrosio-Kirchheim \cite{AK} the closure of the set of positive
density of an integral current is the support:
\be
Cl_{d'}(\set(S))= \spt(S)=\{z \in Z: \,
\forall r>0\,\, ||S||(B(p,r))>0 \}.
\ee
Suppose on the contrary there exists a point $x_0\in X$
such that $z_0=\varphi'_j(x_0) \notin \spt(S)$.  Then there
exists $r>0$ such that
\be
||S||(B(z_0,r))=0.
\ee
Since $S_j=\varphi'_{j\#}T$ converge weakly to $S$,
\be\label{contradict-S}
\lim_{j\to \infty}||S_j||(B(z_0,r))=0.
\ee
We may also view $S_j$ as integral currents in $Z_j \subset Z$
and $B(z_0,r)$ as a ball in $Z_j$ centered on a point
$z_0=\varphi_j'(x_0)\subset Z_j$.   

Examining the definition
of the distance on $Z_j$ (really we only use the Hausdorff distance of
$\varphi_j(X)$ to the $\varphi_j'(X)$ is $<2\epsilon_j$ to get this), 
we know there is a point $x_j\in X$
and $z_j=\varphi_j(x_j) \in Z_j$ such that 
\be
d_j'(z_0, z_j) < 2\epsilon_j.
\ee
So for $j$ sufficiently large that $2\epsilon_j < r/2$ we have
\be
B(z_j, r/2)\subset B(z_0,r).
\ee
Since $\varphi_j: (X, d_j) \to Z_j$ is distance preserving
and $S_j =\varphi_{j\#}T$, we have
\be\label{S_j}
||S_j||(B(z_0,r))\ge ||S_j||( B(z_j, r/2) ) 
= ||T||_{d_j}(B_{d_j}(x_j, r/2)).
\ee
By (\ref{d_j'}), we know
\begin{eqnarray}
B_{d_j}(x_j, r/2)&=&\{ x\in X:\, d_j(x, x_j)<r/2\}\\
&   \subset & \{ x\in X:\ \tfrac{1}{\lambda}d_0(x,x_j) < r/2\}\\
&=&  \{ x\in X:\ d_0(x,x_j) < \lambda r/2\}\\
&=& B_{d_0}(x_j,  \lambda r/2).
\end{eqnarray}
Since $x_j$ lie in a compact metric space $(X, d_0)$, a subsequence
of the $x_j$ converges with respect to $d_0$ to $x_\infty \in X$.
So for $j$ sufficiently large
\be
B_{d_j}(x_j, r/2) \supset B_{d_0}(x_j,  \lambda r/2)
\supset B_{d_0}(x_\infty,  (\lambda r/2)/2).
\ee
Combining this with (\ref{S_j}) and $\|T\|_{d_j} \ge \frac{1}{\lambda^m} \|T\|_{d_0}$ we have
\be
||S_j||(B(z_0,r))\ge ||T||_{d_j}(B_{d_j}(x_j, r/2))
\ge \frac{1}{\lambda^m}||T||_{d_0} (B_{d_0}(x_\infty,  (\lambda r/2)/2))
\ee
which is a constant greater than $0$ because $T$ is 
the integral current structure of $X$. 
This uniform positive lower bound on $||S_j||(B(z_0,r))$
contradicts (\ref{contradict-S}).   Thus $Cl_{d_\infty}(X_\infty)=X$.
\end{proof}

\subsection{An Example with Volume Diverging}

In this section we prove the following example exists: 

\begin{example}\label{notpyramids}
There exists a sequence of
piecewise flat Riemannian metrics, $g_j$, on $[0,1]^2$
whose distances, $d_{g_j}$ satisfy the H\"older condition (\ref{d_j})
which have volumes diverging to infinity but 
converge in the Gromov-Hausdorff sense to 
$([0,1]^2, d_{taxi})$ where 
\be
d_{taxi}((x_1,y_1), (x_2,y_2)) = |x_1-x_2|+|y_1-y_2|.
\ee

\end{example}

Before we construct the example we begin with the basic building
blocks as in Figure~\ref{g-0-g-h}

 \begin{figure}[h]
\begin{center}
\includegraphics[width=.9\textwidth]{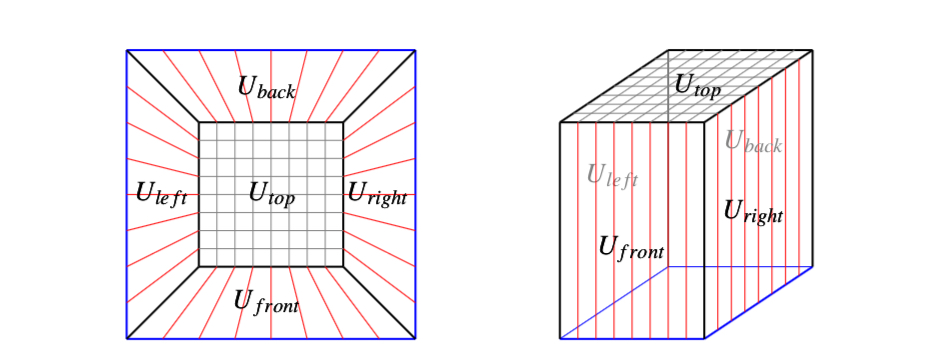} 
\caption{The basic building block for Example~\ref{notpyramids}.  }
\label{g-0-g-h}
\end{center}
\end{figure}

Divide $[0,1]^2$ into five regions:
\be
[0,1]^2=U_{top}\cup U_{front}\cup U_{back}
\cup U_{left} \cup U_{right}
\ee
where  
\begin{eqnarray}
\qquad 
U_{top}\,\,\,\,\,&=&[1/4,3/4]^2
\\
 U_{left}\,\,&=&\{(x,y):\, x\le y\le 1-x\textrm{ and } x \le 1/4\}
\\
 U_{right}&=&\{(x,y):\, 1-x\le y\le x\textrm{ and } x \ge 1/4\}
 \\
U_{front}&=&\{(x,y):\, y\le x\le 1-y\textrm{ and } y \le 1/4\}
\\
 U_{back}\,&=&\{(x,y):\, 1-y\le x\le y\textrm{ and } y \ge 1/4\}
 \end{eqnarray}

We claim that for any $h>1$, there exists  a piecewise flat metric $g_h$ on $[0,1]^2$
which forms the sides and top of a rectangular block of height $h$.
This metric can be defined by pulling back the Euclidean metrics
using the inverses of the following diffeomorphisms
\begin{eqnarray*}
&F_{top}:[0,1]\times [0,1] \to U_{top}
\textrm{ where }
&F_{top}(s,t) =(\tfrac{1}{4} (1-s)+\tfrac{3}{4} s, \tfrac{1}{4} (1-t)+\tfrac{3}{4} t)\\
&F_{left}:  [0,h]\times [0,1]\to U_{left}
\textrm{ where }
&F_{left}(s,t)=(\tfrac{s}{4h}, \tfrac{s}{4h} (1-t) + (1-\tfrac{s}{4h})t)\\
&F_{right}:  [0,h]\times [0,1]\to U_{right}
\textrm{ where }
&F_{right}(s,t)=
(1-\tfrac{s}{4h}, (1-\tfrac{s}{4h}) (1-t) + (\tfrac{s}{4h})t)\\
&F_{front}:  [0,1]\times[0,h] \to U_{front}
\textrm{ where }&
F_{front}(s,t)=( \tfrac{t}{4h}(1-t)+(1-\tfrac{t}{4h}s, \tfrac{t}{4h})\\
&F_{back}:  [0,1]\times[0,h] \to U_{front}
\textrm{ where }&
F_{back}(s,t)=( (1-\tfrac{t}{4h})(1-t)+\tfrac{t}{4h}s, 1-\tfrac{t}{4h})\\
\end{eqnarray*}
Thus
\be\label{volgrid}
Vol_{g_h}([0,1]^2)=1\cdot 1+h\cdot 1 + h\cdot 1+ h\cdot 1+
h\cdot 1=1+4h.
\ee

We claim that the shortest distance between any pair of points
$p$ and $q$ in the boundary of $[0,1]^2$ is achieved by the length
of a curve in the boundary of $[0,1]^2$ measured using the standard Euclidean metric
\be \label{ongrid}
d_{g_h}(p,q) =\inf \{ L_{g_0}(C):\, C[0,1]\to \partial [0,1]^2,
C(0)=p,\, C(1)=q\}.
\ee
First observe that $L_{g_0}=L_{g_h}$ for any curve that
lies in the boundary.  So we need only verify that there aren't
any shorter curves.  Suppose there is a shorter curve $C_{p,q}$.
If that curve enters $U_{top}$ then it must have travelled a distance
$h>1$ to reach the top and again $h>1$ to come back down, thus
it has length $2$.  That is the maximum length of any curve in the boundary, so $C_{p,q}$ cannot reach $U_{top}$.  However if
$C_{p,q}$ lies in $[0,1]^2\setminus U_{top}$, then we can project it
radially to the boundary $\partial [0,1]^2$ to a curve of shorter length
because the map 
\be
\pi: [0,h]\times [0,1]\to [0,1] \textrm{ where } \pi(s,t)=(0,t)
\ee
shortens lengths of curves with respect to the Euclidean metric. So we have our claim.
 
We claim that
\be \label{Haus}
\forall p\in [0,1]^2 \,\,\exists q_p \in \partial [0,1]^2 
\textrm{ such that } d_{g_h}(p,q_p) \le  h + \sqrt{2}.
 \ee
 If $p\subset [0,1]^2\setminus U_{top}$ we just take
 $q_p$ to be the image of the projection of $p$ by $\pi$
 and the distance is $\le h$.  If $p\in U_{top}$ then
 it might need to traverse at most $\diam_{g_h}(U_{top})=\sqrt{2}$
 further.
 
 We now define $X=[0,1]^2$ and define metric tensors
 $g_j$ on $X$ by first dividing $X$ into $2^j\cdot 2^j$ squares:
 \be
 X =\bigcup_{l,m=1}^{2^j} S^j_{l,m}
 \textrm{ where }
 S^j_{l,m}=
  [\tfrac{l-1}{2^j},\tfrac{l}{2^j}]\times
  [\tfrac{m-1}{2^j},\tfrac{m}{2^j}].
  \ee
 Note that we have diffeomorphisms: 
 \be
 F_{l,m}^j: S^j_{l,m}\to [0,1]^2 \textrm{ where }
 F_{l,m}^j(x,y)=(2^j x - 1, 2^j y -1).
 \ee
 We define the metric tensor $g_j$ by pulling back $g_h$
 for $h=h_j$ and rescaling it back down on each square:
  \be
 g_j= (1/2^j)^2 F^{j*}_{l,m}g_{h_j} \textrm{ on } S^j_{l,m} 
  \ee
  as depicted in Figure~\ref{g-1-g-2}.

 \begin{figure}[h]
\begin{center}
$(M,g_1)$\includegraphics[width=.2\textwidth]{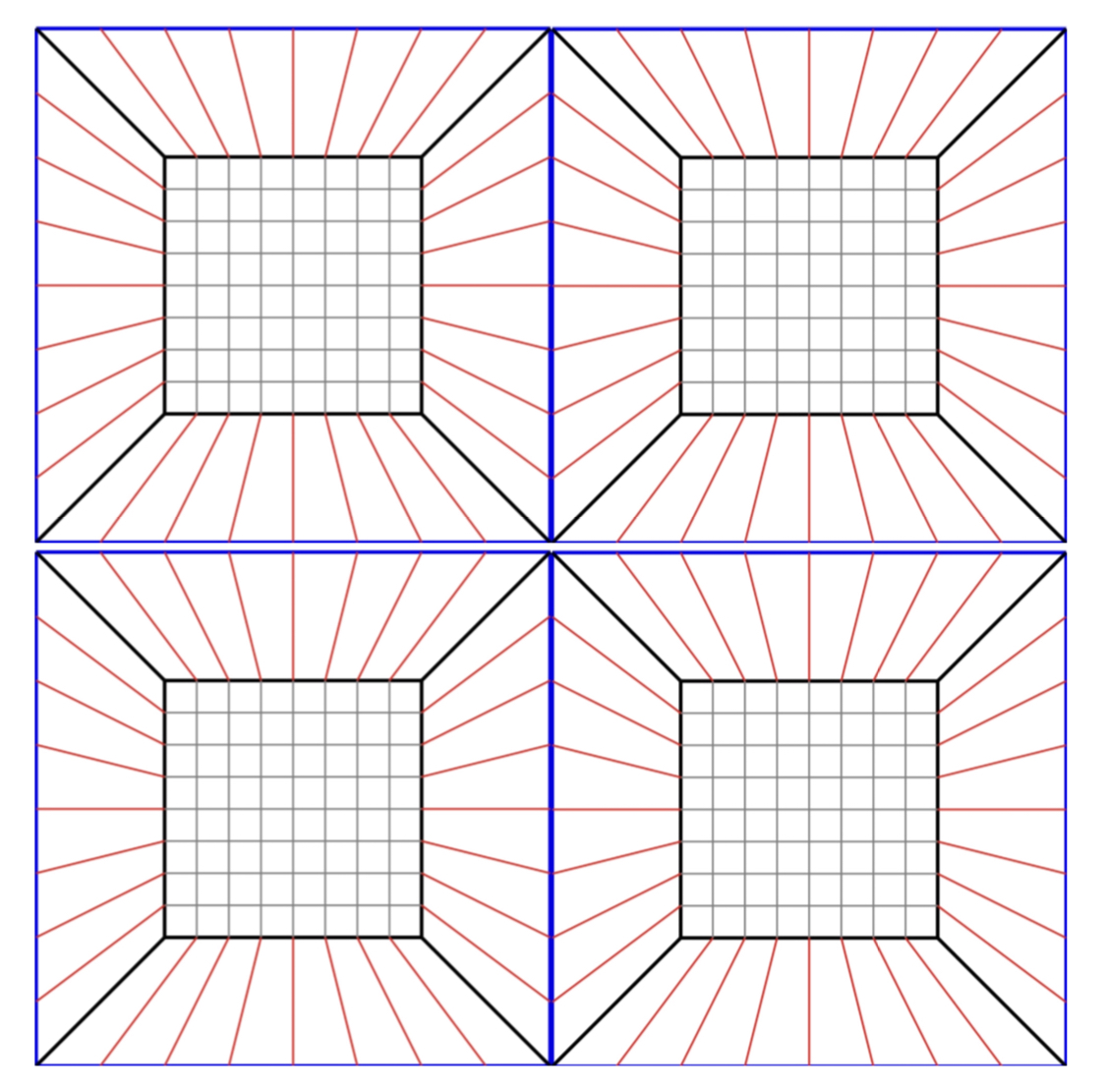} 
$(M,g_2)$\includegraphics[width=.2\textwidth]{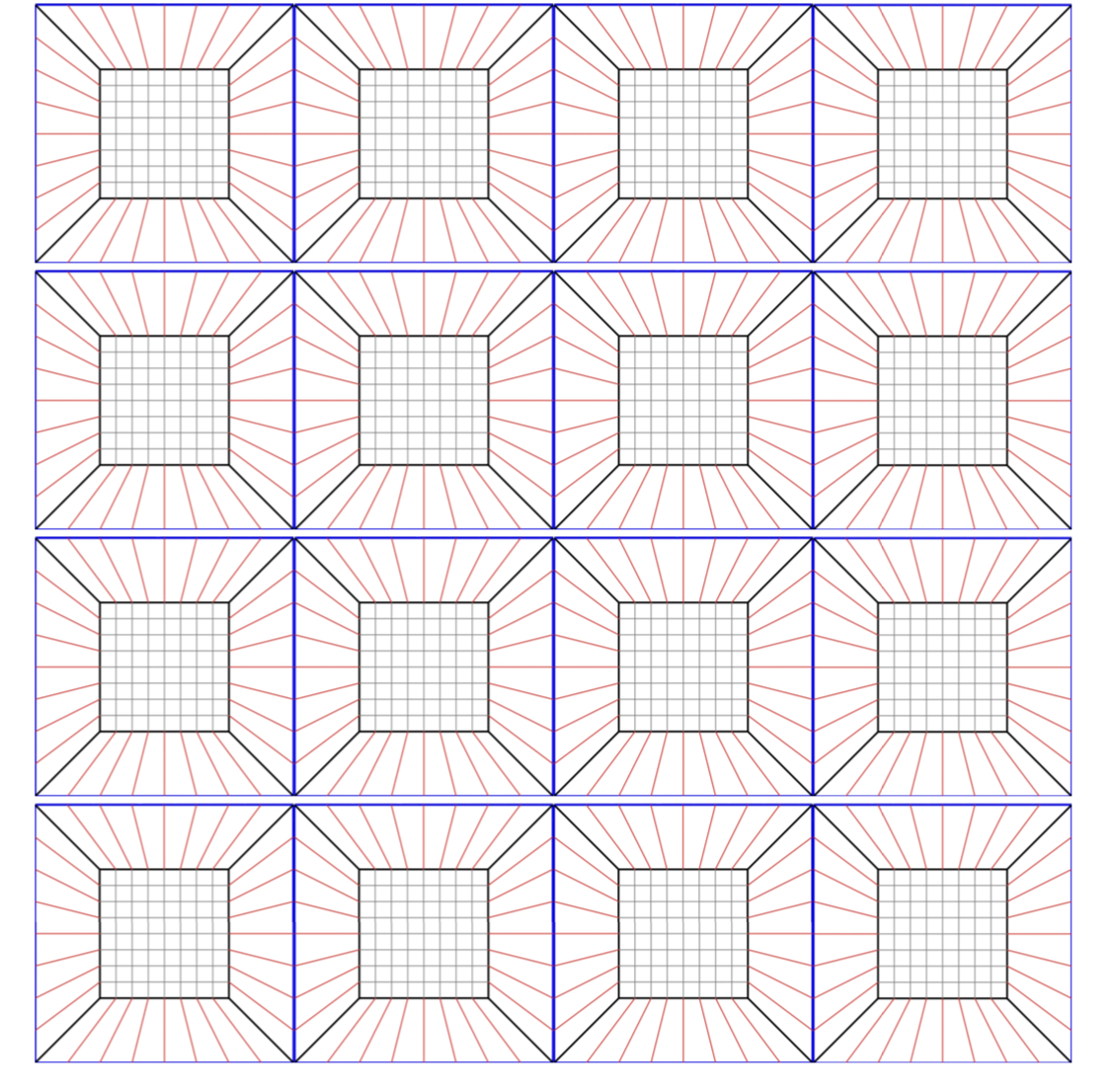} 
$(M,g_3)$\includegraphics[width=.2\textwidth]{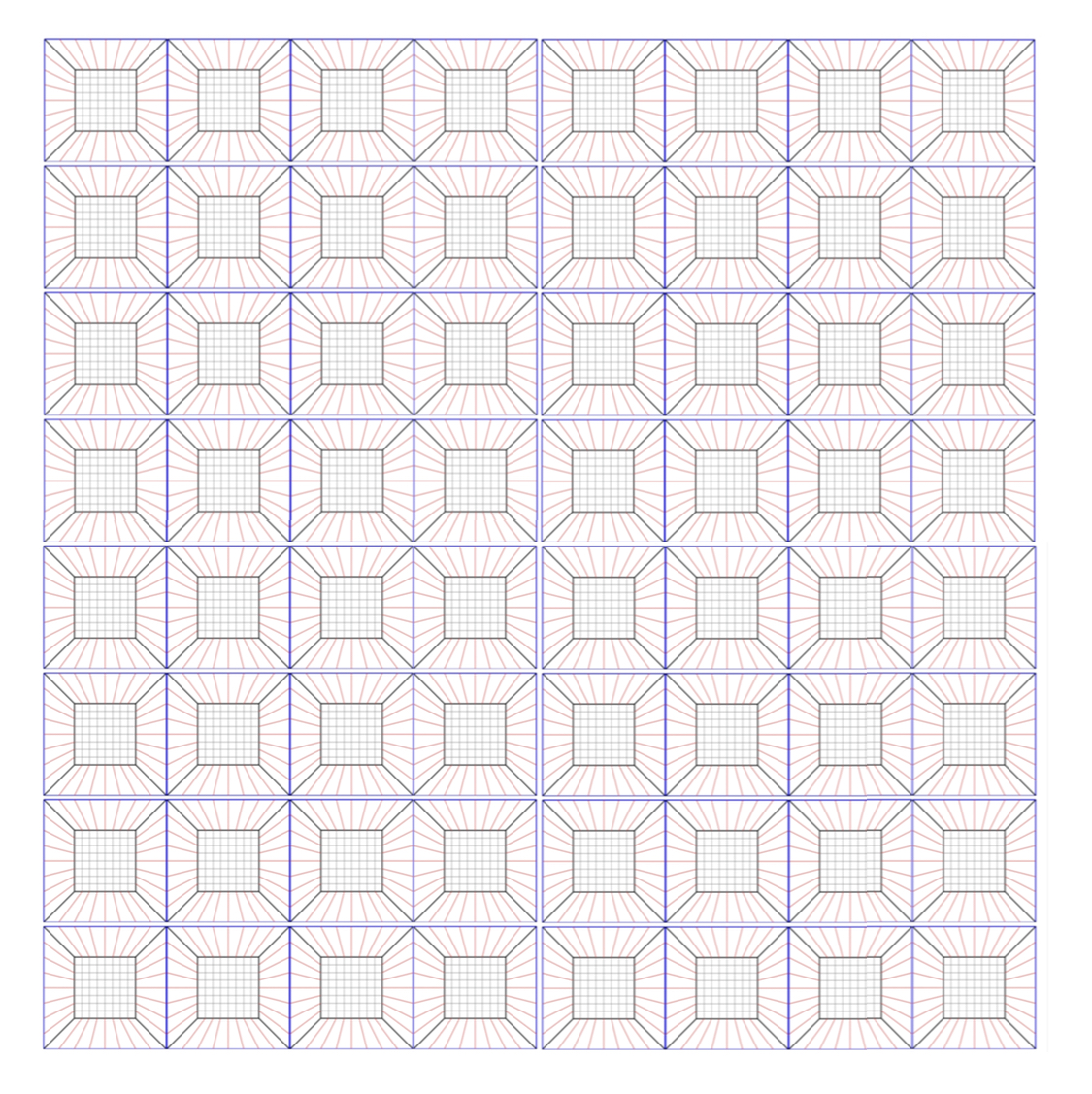} $\qquad$
\caption{The first few terms in the sequence $([0,1]^2, g_j)$.  }
\label{g-1-g-2}
\end{center}
\end{figure}

By (\ref{volgrid}) we have 
 \be\label{volgrid1}
Vol_{g_j}(S^j_{l,m})=(1/2^j)^2 (1+4h_j)
\ee
and so
\be\label{volgrid2}
Vol_{g_j}(X)=2^{2j}(1/2^j)^2 (1+4h_j)= 1+ 4h_j.
\ee

By (\ref{ongrid}) we have for all $p,q \in \partial S^j_{l,m}$,
 \be \label{ongrid1}
d_{g_j}(p,q) =\inf \{ L_{g_0}(C):\, C[0,1]\to \partial S^j_{l,m},
C(0)=p,\, C(1)=q\}.
\ee
So if we define the grid
\be
X_j= \bigcup_{l,m=1}^{2^j} \partial S^j_{l,m} \subset X
\ee
then
\be\label{ongrid2}
\forall p,q \in X_j, \,\, d_{g_j}(p,q)=d_{taxi}(p,q).
\ee
 By (\ref{Haus}) we have 
 \be \label{Haus1}
\forall p\in X \,\,\exists q_p \in X_j 
\textrm{ such that } d_{g_j}(p,q) \le  (1/2^j)(h_j + \sqrt{2}).
 \ee
  Combining this we have 
 \be
 d_{GH}((X,d_{g_j}), (X_j, d_{taxi}) )  \le (1/2^j)(h_j + \sqrt{2}).
 \ee
 It is also easy to see that
 \be \label{Haus2}
\forall p\in X \,\,\exists q_p \in X_j 
\textrm{ such that } d_{taxi}(p,q) \le  (1/2^j).
 \ee
 Thus 
  \be
 d_{GH}((X,d_{g_j}), (X, d_{taxi}) )  \le \delta_j=(1/2^j)(h_j + \sqrt{2}+1).
 \ee
and
 \be
 \diam(X, d_{g_j}) \le 2 +\delta_j.
 \ee
 where
 \be
 \delta_j=(1/2^j)(h_j + \sqrt{2}+1).
\ee
 
 So if we choose 
 \be \label{heights}
 h_j \to \infty \textrm{ and } h_j/2^j \to 0.
 \ee
 then we have a sequence of piecewise flat Riemannian manifolds
 $(X, g_j)$ such that 
 \be
\vol_{g_j}(X)\to \infty \textrm{ and } (X,d_{g_j}) \GHto (X, d_{taxi}). 
 \ee  

We need only verify that $d_{g_j}$ satisfies the H\"older bound (\ref{d_j}).   It
is easy to see that $g_h \ge g_0$ and so by the careful
rescaling done in the definition of $g_j$, we have
$g_j \ge g_0$, so $d_{g_j} \ge d_0=d_{g_0}$ satisfies
left side of (\ref{d_j}) with $\lambda=1$ and any $\beta\le1$.  

We claim that for any $\alpha \le 1$
we have a sequence $h_j$ satisfying (\ref{heights}) such that
\be\label{lambda-alpha}
d_{g_j}(p,q)\le \lambda_{\alpha} d_0(p,q)^\alpha.
\ee 
First observe that by (\ref{ongrid2}), (\ref{Haus1}),
(\ref{Haus2}), and $d_{taxi}(p,q)\le 2 d_{0}(p,q)$ we have
\be
d_{g_j}(p,q)\le 2 d_{0}(p,q) +\delta _j. 
\ee
We also know 
\be
\frac{g_j(v,v)}{g_0(v,v)}=\frac{g_{h_j}(v,v)}{g_0(v,v)}\le (2+h_j)^2
\ee
because no direction is stretched more than 2-fold or h-fold.
Thus
\be
d_{g_j}(p,q) \le \min\{ (2+h_j)d_0(p,q), 2 d_{0}(p,q) +\delta _j\}.
\ee
We need only show that for $s\in [0,2]$ we have
\be \label{thisthis}
f_j(s)=\min\{ (2+h_j)s, 2 s +\delta _j\}\le \lambda_{\alpha} s^\alpha.
\ee
It is easily seen to be true at $s=0$ and it holds at
$s=2$ if we take $\lambda_{\alpha}$ large enough that
\be
2\cdot 2 + \delta_j \le \lambda_{\alpha} 2^\alpha
\ee
which is easily done uniformly in $j$.  Since $f_j$ is
piecewise linear and $\lambda_{\alpha} s^\alpha$ is concave
down, we need only verify (\ref{thisthis}) holds at the
point where
\be
(2+h_j)s= 2 s +\delta _j
\ee
which is where $s=\delta_j/h_j$.   We need only show
\be
f_j(\delta_j/h_j)=\delta_j(2/h_j + 1) \le \lambda_{\alpha}(\delta_j/h_j)^\alpha
\ee
Since $h_j \to \infty$ this holds for large enough $ \lambda_{\alpha}$
if
\be
\delta_j \le \lambda_{\alpha}(\delta_j/h_j)^\alpha
\ee
So we need
\be
\delta_j^{1-\alpha}\le \lambda_{\alpha} h_j^{-\alpha} 
\ee
But $\delta_j=(1/2^j)(h_j + \sqrt{2}+1)$ so we need only show
\be
(1/2^j)^{1-\alpha} h_j^{1-\alpha}  \le \lambda_{\alpha} h_j^{-\alpha}. 
\ee
This works for any
\be
h_j \le \lambda_{\alpha}(1/2^j)^{\alpha-1}.
\ee
We can definitely choose such a sequence satisfying (\ref{heights})
so we are done proving the H\"older bound.

\bibliographystyle{alpha}
 \bibliography{AllenBryden}
\end{document}